\documentclass
{amsart}
\setlength{\textwidth}{15cm} \setlength{\textheight}{21cm}
\setlength{\oddsidemargin}{0cm}
\setlength{\evensidemargin}{0cm}

\usepackage{amsfonts,amssymb,amsmath,amsthm}
\usepackage{url}
\usepackage{enumerate}

\urlstyle{sf}
\newtheorem{thm}{Theorem}[section]
\newtheorem{lem}[thm]{Lemma}
\newtheorem{prop}[thm]{Proposition}

\theoremstyle{definition}
\newtheorem{defn}[thm]{Definition}
\newtheorem{ex}[thm]{Example}

\numberwithin{equation}{section}

\author[J.-Y. Lee, D.-I. Lee]{Jeong-Yup Lee and Dong-il Lee$^*$}
\address{Department of Mathematics Education\\
Catholic Kwandong University\\
Gangwondo 25601, Korea}
\email{jylee@cku.ac.kr}
\address{Department of Mathematics\\
Seoul Women's University\\
Seoul 01797, Korea}
\email{dilee@swu.ac.kr}
\thanks{* 
Corresponding author}

\keywords{
Coxeter group, \GS basis
}
\subjclass[2010]{Primary 20F55, Secondary 05E15, 68W30
}


\newcommand{\F}{\mathbb{F}}
\newcommand{\C}{\mathbb{C}}

\newcommand{\G}{Gr\"{o}bner }
\newcommand{\GS}{Gr\"{o}bner-Shirshov }

\hyphenation{algebra algebras asso-cia-tive Coxeter mono-mials Micro-symposium non-commuta-tive
quasi-crystals Quasi-crystals super-algebra super-algebras super-symmetry non-associative super-symmetry}

\begin{document}

\title[
Coxeter Group $H_4$]{
Gr\"{o}bner-Shirshov bases for
Non-crystallographic
Coxeter Groups 
}

\begin{abstract}
For the noncrystallographic
Coxeter groups of type $H$,
we construct their \GS bases and the corresponding standard monomials.
\end{abstract}
\maketitle

\section{Introduction}

The object in this paper is the symmetry group of
a four-dimensional regular polytope, the 120-cell or its dual, the 600-cell.
We note that this is among a general class of groups, so called, the Coxeter groups.
They appear naturally in geometry and algebra.
In 1935, the finite Coxeter groups were classified by Coxeter in terms of Coxeter-Dynkin diagrams \cite{Coxeter35}.
Contrary to the crystallographic Coxeter groups, 
we remark that the root systems of type $H_k$ $(k=2,3,4)$ and the affine extensions of the Coxeter groups of type $H$
are related to the structure of quasicrystals with five-fold symmetry \cite{ChenMoodyPatera, PateraTwarock}.
Quasicrystals are metalic alloys whose diffraction pattern consists of pure point peaks.
Quasicrystal with five-fold rotational symmetry of diffraction pattern was discovered around mid 1980's by Dan Shechtman.

Our approach to understanding the structure of Coxeter groups is the noncommutative \G basis theory,
which is called the {\em \GS basis theory}.
The theory of \GS bases provides a powerful tool for understanding
the structure of (non)associative algebras and their
representations, especially in computational aspects.
With the ever-growing power of computers,
it is now viewed as a universal engine behind algebraic or symbolic computation.

The effective notion stems from Shirshov's Composition Lemma and his algorithm \cite{Sh, Shirshov_Selectd} for Lie algebras and
independently from Buchberger's algorithm \cite{B65} of computing \G bases for commutative algebras.
In \cite{Bo}, Bokut applied Shirshov's method to associative algebras, and
Bergman mentioned the diamond lemma for ring theory \cite{Be}.

The \GS bases for finite dimensional simple Lie algebras were completely determined by Bokut and Klein \cite{BK96, BK98, BK99}.
The cases for affine Kac-Moody algebras were calculated in \cite{Poro}. 
For basic classical Lie superalgebras of types $A,B,C,D$ and their universal enveloping algebras, Bokut 
et al. developed the corresponding theory
and gave an explicit construction of \GS bases \cite{BKLM}. 
Moreover, exceptional Lie superalgebras were treated in \cite{LeeDI4}.
%

For crystallographic Coxeter groups of classical and exceptional types,
their \GS bases were constructed in \cite{BShiao, 
DenisLee, LeeDI5, LeeDI6, Svechkarenko}.
In this paper, we deal with 
the noncrystallographic Coxeter group of type $H_4$.
By completing the relations coming from a presentation of the Coxeter group,
we find a \GS basis to obtain the set of standard monomials.
Thus the operation table between 11400 standard monomials follows naturally.

\section{Preliminaries}

In this section, we recall a basic theory of {\em \GS bases} for associative algebras to make the paper self-contained.
\\

Let $X$ be a set and let $\langle X\rangle$ be the free monoid of associative
words on $X$. We denote the empty word by $1$ and the {\em length}
(or {\em degree}) of a word $u$ by $l(u)$. A well-ordering $<$ on
$\langle X\rangle$ is called a {\em monomial order} if $x<y$ implies $axb<ayb$
for all $a,b\in \langle X\rangle$.

Fix a monomial order $<$ on $\langle X\rangle$ and let $\mathbb{F}\langle X\rangle$ be the
free associative algebra generated by $X$ over a field $\mathbb{F}$.
Given a nonzero element $p \in \mathbb{F}\langle X\rangle$, we denote by
$\overline{p}$ the maximal monomial (called the {\em leading
monomial}) appearing in $p$ under the ordering $<$. Thus $p = \alpha
\overline{p} + \sum \beta _i w_i $ with $\alpha , \beta _i \in
\mathbb{F}$, $ w_i \in \langle X\rangle$, $\alpha \neq 0$ and $w_i <
\overline{p}$. If $\alpha =1$, $p$ is said to be {\em monic}.

Let $S$ be a subset of monic elements in
$\mathbb{F}\langle X\rangle$, and let $I$ be the two-sided ideal of $\mathbb{F}\langle X\rangle$
generated by $S$. Then we say
that the algebra $A= \mathbb{F}\langle X\rangle /I$ is {\em defined by $S$}

\begin{defn}
Given a subset  $S$  of monic elements in
$\mathbb{F}\langle X\rangle$, a monomial $u \in \langle X\rangle$ is said to be {\em $S$-standard} (or {\em $S$-reduced})
if $u \neq a\overline{s}b$
 for any $s \in S$ and $a, b \in \langle X\rangle$. Otherwise, the monomial $u$ is said to be {\em $S$-reducible}.
\end{defn}

\begin{lem}[\cite{Be, Bo}]\label{division}
Every $p \in \mathbb{F}\langle X\rangle$ can be expressed as
\begin{equation} \label{equ-1}
p = \sum \alpha_i a_is_ib_i + \sum \beta_j
u_j,
\end{equation}
where $\alpha_i, \beta_j \in \mathbb{F}$, $a_i, b_i, u_j \in \langle X\rangle$, $s_i \in S$, $a_i \overline{s_i} b_i \leq
\overline{p}$, $u_j \leq
\overline{p}$ and $u_j$ are $S$-standard.
\end{lem}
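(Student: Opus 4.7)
The plan is to proceed by well-founded induction on the leading monomial $\overline{p}$, exploiting the fact that $<$ is a well-ordering on $\langle X\rangle$. I would take $p=0$ as the trivial base case, with both sums in \eqref{equ-1} empty. For the inductive step, I would assume the conclusion for every $q\in\mathbb{F}\langle X\rangle$ with $\overline{q}<\overline{p}$, and then split into two cases depending on whether $\overline{p}$ is $S$-standard or $S$-reducible.

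In the $S$-standard case, I would write $p=\alpha\,\overline{p}+p'$ with $\alpha\in\mathbb{F}^{\times}$ and either $p'=0$ or $\overline{p'}<\overline{p}$. Applying the inductive hypothesis to $p'$ yields an expression of the form \eqref{equ-1} in which every $a_i\overline{s_i}b_i$ and every $u_j$ is $\leq \overline{p'}<\overline{p}$; appending $\alpha\,\overline{p}$ to the $S$-standard part (as a new $\beta_j u_j$) produces the required decomposition for $p$.

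In the $S$-reducible case, I would fix a factorization $\overline{p}=a\,\overline{s}\,b$ with $s\in S$ and $a,b\in\langle X\rangle$, let $\alpha$ be the leading coefficient of $p$, and form $q:=p-\alpha\,asb$. Because $s$ is monic and $<$ is a monomial order ($x<y\Rightarrow axb<ayb$), every monomial appearing in $asb$ other than $a\overline{s}b$ is strictly smaller than $a\overline{s}b=\overline{p}$, so the leading term of $p$ cancels and $\overline{q}<\overline{p}$. The inductive hypothesis applied to $q$ gives a decomposition of the required shape, and prepending the single term $\alpha\,asb$ (which satisfies $a\overline{s}b=\overline{p}\leq\overline{p}$) recovers the expression for $p$.

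The only subtle step is the termination of this reduction procedure, and it is precisely the well-ordering hypothesis on $<$ that validates the strong induction, so there is no real obstacle; the argument is essentially the standard noncommutative division algorithm adapted to the monomial order on $\langle X\rangle$.
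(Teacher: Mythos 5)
Your argument is correct: it is the standard noncommutative division algorithm, proved by well-founded induction on $\overline{p}$ with the case split on whether $\overline{p}$ is $S$-standard or $S$-reducible, and the two key points (that $\overline{asb}=a\overline{s}b$ because $s$ is monic and $<$ is a monomial order, and that termination follows from $<$ being a well-ordering) are both handled. The paper itself gives no proof of this lemma, citing \cite{Be, Bo} instead, and your proof is essentially the one found in those references, so there is nothing to add.
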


\noindent {\it Remark}.
The term $\sum \beta_j u_j$ in the expression (\ref{equ-1}) is
called a {\em normal form} (or a {\em remainder}) of $p$ with
respect to the subset $S$ (and with respect to the monomial order
$<$). In general, a normal form is not unique.
\\

As an immediate corollary of Lemma \ref{division}, we obtain:

\begin{prop} 
The set of $S$-standard monomials spans the algebra
$A=\mathbb{F}\langle X\rangle/I$ defined by the subset $S$, as a vector space over $\mathbb{F}$.
\end{prop}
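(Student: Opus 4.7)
The plan is to deduce the statement directly from Lemma \ref{division}, treating it as the immediate algebraic translation of the division procedure into the quotient $A = \mathbb{F}\langle X\rangle/I$.

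First I would fix an arbitrary element $\bar{p} \in A$ and lift it to a representative $p \in \mathbb{F}\langle X\rangle$. Applying Lemma \ref{division} to $p$ yields an expression of the form
\begin{equation*}
p = \sum_i \alpha_i a_i s_i b_i + \sum_j \beta_j u_j,
\end{equation*}
with each $s_i \in S$, each $u_j$ an $S$-standard monomial, and $\alpha_i, \beta_j \in \mathbb{F}$. Since $S \subset I$ and $I$ is a two-sided ideal, every term $a_i s_i b_i$ lies in $I$, so the first sum vanishes when we pass to the quotient $A$. Passing to cosets therefore gives
\begin{equation*}
\bar{p} = \sum_j \beta_j \bar{u}_j \quad \text{in } A,
\end{equation*}
which exhibits $\bar{p}$ as an $\mathbb{F}$-linear combination of images of $S$-standard monomials. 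Since $\bar{p}$ was arbitrary, the images of $S$-standard monomials span $A$ over $\mathbb{F}$.

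There is essentially no obstacle here: the content of the proposition is entirely contained in the existence part of Lemma \ref{division}, and only the observation that $I$ absorbs each $a_i s_i b_i$ is needed to complete the argument. I would keep the write-up to a few lines and emphasize that linear independence of $S$-standard monomials is \emph{not} claimed at this stage; that stronger conclusion requires $S$ to be a \GS basis, which will be addressed later in the paper.
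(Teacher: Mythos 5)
Your argument is correct and is exactly the route the paper intends: it presents the proposition as an immediate corollary of Lemma \ref{division}, and your write-up simply makes explicit the passage to the quotient where the terms $a_i s_i b_i \in I$ vanish. Your closing remark that linear independence is not claimed at this stage is also consistent with the paper, which defers that to Theorem \ref{cor-1}.
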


Let $p$ and $q$ be monic elements in $\F\langle X\rangle$ with leading
monomials $\overline{p}$ and $\overline{q}$. We define the {\em
composition} of $p$ and $q$ as follows.

\begin{defn}
(a) If there exist $a$ and $b$ in $\langle X\rangle$ such that
$\overline{p}a = b\overline{q} = w$ with $l(\overline{p}) > l(b)$,
then the {\em composition of intersection} is defined to be $(p,q)_w = pa -bq$.

(b) If there exist $a$ and $b$ in $\langle X\rangle$ such that $a \neq 1$,
$a\overline{p}b=\overline{q}=w$, then the {\em composition of
inclusion} is defined to be $(p,q)_{a,b} = apb - q$.
\end{defn}


Let $p, q \in \F\langle X\rangle$ and $w \in \langle X\rangle$. We define the {\em
congruence relation} on $\F\langle X\rangle$ as follows: $p \equiv q
\mod (S; w)$ if and only if $p -q = \sum \alpha_i a_i s_i b_i$, where $\alpha_i \in \mathbb{F}$,
$a_i, b_i \in \langle X\rangle$, $s_i \in S$, $a_i
\overline{s_i} b_i < w$.

\begin{defn}
A subset $S$ of monic elements in $\mathbb{F}\langle X\rangle$
is said to be {\em closed under composition} if
\begin{enumerate}
\item[] $(p,q)_w \equiv 0 \mod (S;w)$ and $(p,q)_{a,b} \equiv
0 \mod (S;w)$ for all $p,q \in S$, $a,b \in \langle X\rangle$ whenever the
compositions $(p,q)_w$ and $(p,q)_{a,b}$ are defined.
\end{enumerate}
\end{defn}



The following theorem is a main tool for our results in the next section.

\begin{thm}[\cite{Be, Bo}] \label{cor-1}
Let $S$ be a subset of
monic elements in $\F\langle X\rangle$. Then the following conditions
are equivalent\,{\rm :}
\begin{enumerate}
\item [{\rm (a)}] $S$ is closed under composition.
\item [{\rm (b)}] For each $p \in \mathbb{F}\langle X\rangle$, a normal form of $p$ with respect to $S$ is unique.
\item [{\rm (c)}]
The set of $S$-standard monomials forms an $\F$-linear basis of the
algebra $A=\mathbb{F}\langle X\rangle/I$ defined by $S$.
\end{enumerate}
\end{thm}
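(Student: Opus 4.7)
The plan is to prove the equivalences via the cycle (a) $\Rightarrow$ (c) $\Rightarrow$ (b) $\Rightarrow$ (a). The preceding proposition already establishes spanning of the $S$-standard monomials in $A$, so what condition (c) adds is their linear independence; the argument must ultimately deliver this. The direction (c) $\Rightarrow$ (b) is immediate: if $\sum \beta_j u_j$ and $\sum \beta'_j u'_j$ are two normal forms of the same $p$, their difference lies in $I$ and is an $\F$-combination of $S$-standard monomials, hence $0$ by linear independence, so the two normal forms coincide.

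For (b) $\Rightarrow$ (a), consider a composition of intersection with $\overline{p}a = b\overline{q} = w$. The polynomial $pa$ has leading monomial $w$ with coefficient $1$ and admits two one-step reductions: rewriting $w$ via the rule $p$ gives $pa - 1 \cdot p \cdot a = 0$, already a normal form; rewriting $w$ via $q$ (using $w = b\overline{q}$) gives $pa - b q = (p,q)_w$, which by Lemma \ref{division} admits some further normal form $r$ reached through intermediate terms $a' \overline{s'} b' < w$. Uniqueness (b) forces $r = 0$, which is precisely $(p,q)_w \equiv 0 \mod (S; w)$. The inclusion case is handled identically by reducing $apb$ in two ways.

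The main obstacle is (a) $\Rightarrow$ (c), which I prove by contrapositive: suppose some nonzero $f \in I$ is an $\F$-linear combination of $S$-standard monomials, and derive a contradiction. Using $I$-membership write $f = \sum_i \alpha_i a_i s_i b_i$, and among all such expressions choose one minimizing $w := \max_i a_i \overline{s_i} b_i$ and, among those, minimizing the number of indices achieving the maximum. Since $\overline{f}$ is $S$-standard while every $a_i \overline{s_i} b_i$ is $S$-reducible, we must have $w > \overline{f}$, so the coefficient of $w$ in the sum vanishes and at least two distinct indices, say $1$ and $2$, satisfy $a_1 \overline{s_1} b_1 = a_2 \overline{s_2} b_2 = w$.

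The heart of the diamond argument is then a case analysis of how $\overline{s_1}$ and $\overline{s_2}$ sit inside $w$: they are either (i) disjoint, (ii) overlapping at an end, yielding a composition of intersection $(s_1, s_2)_{w'}$, or (iii) nested, yielding a composition of inclusion $(s_1, s_2)_{a,b}$. In each case, a short algebraic manipulation combined with hypothesis (a), which provides $(s_1, s_2)_{w'} \equiv 0 \mod (S; w')$ or $(s_1, s_2)_{a,b} \equiv 0 \mod (S; w')$, lets me rewrite $\alpha_1 a_1 s_1 b_1 + \alpha_2 a_2 s_2 b_2$ as a combination of terms $a' s' b'$ with $a' \overline{s'} b' < w$, together with the remaining original summands. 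This strictly decreases either $w$ itself or the number of indices achieving it, contradicting minimality and finishing the proof.
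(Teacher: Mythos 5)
The paper does not prove this theorem at all: it is quoted as a known result with a citation to Bergman and Bokut, so there is no in-paper argument to compare against. Your proposal is, in substance, the standard proof of the Composition--Diamond Lemma from exactly those sources, and the cycle (a) $\Rightarrow$ (c) $\Rightarrow$ (b) $\Rightarrow$ (a) is sound. The steps (c) $\Rightarrow$ (b) and (b) $\Rightarrow$ (a) are complete as written; in the latter, note that the normal form $r$ you produce for $pa$ via the $q$-reduction is a legitimate normal form in the sense of Lemma \ref{division} because $b\overline{q}=w=\overline{pa}$ and all subsequent reduction terms lie strictly below $w$, which is what turns $r=0$ into the congruence $(p,q)_w\equiv 0 \mod (S;w)$. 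The only place where real content is compressed is the case analysis in (a) $\Rightarrow$ (c): the disjoint case (i) is not covered by the composition hypothesis and must be handled by the separate (routine but necessary) computation showing that $a_1s_1b_1-a_2s_2b_2$ expands into terms $a's'b'$ with $a'\overline{s'}b'<w$ after substituting $s_i=\overline{s_i}+(\text{lower terms})$; and in cases (ii)--(iii) one must translate the congruence $(s_1,s_2)_{w'}\equiv 0 \mod(S;w')$, which lives at the subword $w'$ of $w$, back to the ambient word $w$ by left and right multiplication. Both points are standard and your sketch correctly identifies where they go, so I would accept this as a faithful reconstruction of the cited proof.
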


\begin{defn}
A subset $S$ of monic elements in $\mathbb{F}\langle X\rangle$ is a
{\em \GS basis} if $S$ satisfies one of the equivalent
conditions in Theorem \ref{cor-1}. In this case, we say that
$S$ is a {\em \GS basis} for the algebra $A$ defined by $S$.
\end{defn}


\section{Coxeter groups of types $H_2$ and $H_3$}

The Coxeter group of type $H_2=I_2(5)$ is just the dihedral group of a regular pentagon. That is the group generated by two reflections $s_1, s_2$ with relation $$(s_1s_2)^5=1.$$
If we let $G$ be the Coxeter group of type $H_2$
and take the complex field $\C$ as our base field, then
the group algebra $\C[G]$ is generated by $s_1,s_2$ with the defining relations $$
s_1^2=1=s_2^2,\quad s_2s_1s_2s_1s_2=s_1s_2s_1s_2s_1.
$$
Set our monomial order to be the degree-lexicographic order with $s_2>s_1$. Denote by $R$ the set of quadratic relations and the braid relation, that is, $$
R=\{s_1^2-1,\ s_2^2-1,\ s_2s_1s_2s_1s_2-s_1s_2s_1s_2s_1\}.$$
Then
the group algebra $\C[G]$ is isomorphic to $\C\langle s_1,s_2\rangle /I$,
where $I$ is the two-sided ideal generated by $R$ in the free associative algebra $\C\langle s_1,s_2\rangle$.
From now on, we identify $\C[G]$ with $\C\langle s_1,s_2\rangle /I$.
We say that the algebra $\C[G]$ is defined by $R$.

Since the number of $R$-standard monomials is 10, the dimension of $\C[G]$, we know that $R$ is already a \GS basis for $\C[G]$.

\begin{prop}
The $R$-standard monomials form the $\C$-basis elements for the type $H_2$. We enumerate as follows\,{\rm :}
\begin{equation}\label{std_monomials_H2}
1,\	s_1,\	s_2,\ s_1s_2,\	s_2s_1,\	s_1s_2s_1,\	s_2s_1s_2,\	s_1s_2s_1s_2,\	s_2s_1s_2s_1,\	s_1s_2s_1s_2s_1.\end{equation}
\end{prop}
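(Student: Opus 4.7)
The plan is to enumerate all $R$-standard monomials explicitly and invoke the fact that $|H_2|=10$, so that the dimension match, combined with the general spanning result, forces both the basis claim and the \GS property via Theorem~\ref{cor-1}(c).

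First, I would identify the leading monomials of the elements of $R$ under the degree-lexicographic order with $s_2>s_1$. They are $s_1^2$, $s_2^2$, and $s_2s_1s_2s_1s_2$; the last of these because both sides of the braid relation have degree $5$, and lexicographic comparison yields $s_2s_1s_2s_1s_2>s_1s_2s_1s_2s_1$. Thus an $R$-standard monomial is a word in $\langle s_1,s_2\rangle$ containing none of $s_1^2$, $s_2^2$, or $s_2s_1s_2s_1s_2$ as a subword; equivalently, a (possibly empty) strictly alternating word that avoids the substring $s_2s_1s_2s_1s_2$.

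Next, I would list all such words. A nonempty alternating word is a prefix of either $s_1s_2s_1s_2\cdots$ or $s_2s_1s_2s_1\cdots$. Among prefixes of $s_2s_1s_2s_1\cdots$, the forbidden substring appears exactly from length $5$ onward, so the survivors are $s_2,\ s_2s_1,\ s_2s_1s_2,\ s_2s_1s_2s_1$. Among prefixes of $s_1s_2s_1s_2\cdots$, the forbidden substring first appears as a suffix at length $6$, so the survivors are $s_1,\ s_1s_2,\ s_1s_2s_1,\ s_1s_2s_1s_2,\ s_1s_2s_1s_2s_1$. Together with the empty word $1$, this produces exactly the ten monomials listed in \eqref{std_monomials_H2}.

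Finally, the spanning proposition preceding Theorem~\ref{cor-1} guarantees that these ten $R$-standard monomials span $\C[G]$ over $\C$. Since $H_2$ is the dihedral group of order $10$, one has $\dim_\C \C[G]=10$, so the ten spanning monomials are automatically linearly independent and form a $\C$-basis. As a byproduct, condition~(c) of Theorem~\ref{cor-1} confirms that $R$ is already a \GS basis, matching the remark made just before the statement. The only task of any weight is the combinatorial bookkeeping in the enumeration step, which is short because the length-$5$ braid relation cuts the list off quickly; no composition computation is required.
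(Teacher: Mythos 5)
Your proposal is correct and follows essentially the same route as the paper: enumerate the $R$-standard monomials by excluding the leading monomials $s_1^2$, $s_2^2$, $s_2s_1s_2s_1s_2$, count ten of them, and compare with $\dim_\C\C[G]=|H_2|=10$ to conclude both the basis claim and, via Theorem~\ref{cor-1}, that $R$ is already a \GS basis. The paper compresses this into the single sentence preceding the proposition, whereas you spell out the alternating-word enumeration explicitly; the content is the same.
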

\


Now we consider the Coxeter group $G$ of type $H_3$, which is 
called the full icosahedral group.
It is the group generated by three reflections $s_1, s_2, s_3$ with braid relations
$$(s_1s_2)^5=(s_2s_3)^3=1=(s_1s_3)^2.$$
Let $G$ be the Coxeter group of type $H_3$. Then
the group algebra $\C[G]$ is generated by $s_1, s_2, s_3$ with defining relations $$
s_1^2=1=s_2^2=s_3^2,\quad s_2s_1s_2s_1s_2=s_1s_2s_1s_2s_1,\ s_3s_2s_3=s_2s_3s_2,\ s_3s_1=s_1s_3.
$$
We take the same monomial order as above with $s_3> s_2>s_1$, and set \begin{equation}\label{defining_relations_H3}
R=\{s_1^2-1,\ s_2^2-1,\ s_3^2-1,\ s_2s_1s_2s_1s_2-s_1s_2s_1s_2s_1,\ s_3s_2s_3-s_2s_3s_2,\ s_3s_1-s_1s_3\}.\end{equation}
Then 
$\C[G]$ is identified with $\C\langle s_1,s_2,s_3\rangle /I$,
where $I$ is the two-sided ideal generated by $R$ in the free associative algebra $\C\langle s_1,s_2,s_3\rangle$.

\begin{lem} The following relations hold in $\C[G]$\,{\rm :} \label{Lem_H3}

{\rm (a)} $(s_3s_2s_1)s_3-s_2(s_3s_2s_1)=0$,

{\rm (b)} $(s_3s_2s_1)s_3s_2-s_2(s_3s_2s_1s_2)=0$,

{\rm (c)} $(s_3s_2s_1)^2-s_2(s_3s_2s_1s_2s_1)=0$,

{\rm (d)} $(s_3s_2s_1s_2)s_3s_2-s_2(s_3s_2s_1s_2)s_3=0$,

{\rm (e)} $(s_3s_2s_1s_2)s_3s_2s_1-s_2(s_3s_2s_1s_2s_1)s_3=0$,

{\rm (f)} $(s_3s_2s_1s_2)^2-s_2(s_3s_2s_1s_2s_1)s_3s_2=0$,

{\rm (g)} $(s_3s_2s_1s_2s_1)^2-s_2(s_3s_2s_1s_2s_1)s_3s_2s_1s_2=0$.
\end{lem}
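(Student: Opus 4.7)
My plan is to derive each of the seven relations as a sequence of rewriting steps using only the defining relations in $R$: the idempotent relations $s_i^2=1$, the commutation $s_3s_1=s_1s_3$, the short braid $s_3s_2s_3=s_2s_3s_2$, and the pentagonal braid $s_2s_1s_2s_1s_2=s_1s_2s_1s_2s_1$. Since $\C[G]=\C\langle s_1,s_2,s_3\rangle/I$ with $I$ generated by $R$, any such word-level derivation automatically produces an identity in $\C[G]$. The relations (a)--(g) are organized so that each one can be obtained from the previous one(s) with a single structural move; I would prove (a) directly and treat the rest as a short cascade.

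For (a), I would start with $s_3s_2s_1s_3$, commute the tail via $s_1s_3=s_3s_1$ to reach $s_3s_2s_3s_1$, and then apply the short braid to obtain $s_2s_3s_2s_1=s_2(s_3s_2s_1)$. Relations (b) and (c) are immediate by right-multiplying (a) by $s_2$ and by $s_2s_1$, respectively, since neither multiplication creates a new reducible subword that needs attention.

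For (d), I would begin with $(s_3s_2s_1s_2)s_3s_2$, apply the short braid to the trailing block $s_2s_3s_2\mapsto s_3s_2s_3$, and then invoke (a) on the exposed subword $s_3s_2s_1s_3$, which delivers $s_2(s_3s_2s_1s_2)s_3$ directly. Relation (e) then follows by right-multiplying (d) by $s_1$ and commuting the resulting $s_3s_1$ on the right-hand side; and (f) follows by right-multiplying (e) by $s_2$.

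The last identity (g) is the only one that requires more than one structural move. My plan is to start from $s_3s_2s_1s_2s_1\cdot s_3s_2s_1s_2s_1$, commute $s_1s_3\to s_3s_1$ in the middle to reach $s_3s_2s_1s_2s_3\cdot s_1s_2s_1s_2s_1$, then apply the pentagonal braid to convert the final factor $s_1s_2s_1s_2s_1$ into $s_2s_1s_2s_1s_2$. This exposes the subword $s_3s_2s_1s_2s_3s_2s_1s_2$ on which (f) applies, turning the expression into $s_2(s_3s_2s_1s_2s_1)s_3s_2\cdot s_1s_2$, which is exactly the right-hand side. The main (and minor) obstacle is the bookkeeping in (g): each step is mechanical, but one must choose the correct order of one commutation and one pentagonal braid move to manufacture the subword that (f) recognizes.
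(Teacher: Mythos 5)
Your proposal is correct and follows essentially the same route as the paper: (a) is obtained by one commutation $s_1s_3=s_3s_1$ followed by the braid $s_3s_2s_3=s_2s_3s_2$, the relations (b), (c), (e), (f) are obtained by right multiplication from the preceding ones, (d) uses the braid move together with (a), and (g) is a short chain consisting of a commutation, the pentagonal braid, and an earlier part of the lemma. The only cosmetic difference is that in (g) you invoke (f) directly where the paper re-applies $s_2s_3s_2=s_3s_2s_3$, which amounts to the same computation.
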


\begin{proof}
(a) We calculate that $$
(s_3s_2s_1)s_3=(s_3s_2s_3)s_1=(s_2s_3s_2)s_1$$
by the braid relations $s_1s_3=s_3s_1$ and $s_3s_2s_3=s_2s_3s_2$.

(b)$\sim$(c) 
Just multiplying the previous relation from the right by $s_2$, and by $s_1$, we have the relations.

(d)$\sim$(f) We apply $s_2s_3s_2=s_3s_2s_3$ and $s_1s_3=s_3s_1$ to get our relations.

(g) We obtain that
\begin{eqnarray*}s_3s_2s_1s_2(s_1s_3)s_2s_1s_2s_1&=&s_3s_2s_1s_2s_3(s_1s_2s_1s_2s_1)=s_3s_2s_1(s_2s_3s_2)s_1s_2s_1s_2\\
&=&s_3s_2s_1s_3s_2s_3s_1s_2s_1s_2=s_2(s_3s_2s_1s_2s_1)s_3s_2s_1s_2,\end{eqnarray*} using the braid relations
$s_1s_3=s_3s_1$, $s_1s_2s_1s_2s_1=s_2s_1s_2s_1s_2$ and $s_2s_3s_2=s_3s_2s_3$.
\end{proof}

In the following theorem, we have a monomial basis of $\C[G]$.
Note that this is different from the result in \cite{LeeDI6} since we have the reverse index of generators.

\begin{thm}
We denote by $\widehat{R}$ the set of defining relations $R$
combined with the relations in Lemma \ref{Lem_H3}. Then $\widehat{R}$
forms a \GS basis for $\C[G]$. The corresponding $\widehat{R}$-standard monomials 
are of the form $$M_2M_3,$$ where
$M_2$ is among the standard monomials (\ref{std_monomials_H2}) and
$M_3$ is one of the following\,{\rm :} \begin{eqnarray}\label{std_monomials_H3}
&1,\ s_3,\ s_3s_2,\ s_3s_2s_1,\ s_3s_2s_1s_2,\ s_3s_2s_1s_2s_1,\ (s_3s_2s_1s_2)s_3,\ (s_3s_2s_1s_2s_1)s_3,\\
&(s_3s_2s_1s_2s_1)(s_3s_2), (s_3s_2s_1s_2s_1)(s_3s_2s_1), (s_3s_2s_1s_2s_1)(s_3s_2s_1s_2), (s_3s_2s_1s_2s_1)(s_3s_2s_1s_2)s_3.\nonumber
\end{eqnarray}
\end{thm}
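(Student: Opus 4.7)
The plan is to verify $\widehat{R}$ is a \GS basis for $\C[G]$ via Theorem \ref{cor-1}(c): it suffices to show the set of $\widehat{R}$-standard monomials is a $\C$-linear basis of $\C[G]$. Because each relation in Lemma \ref{Lem_H3} holds in $\C[G]$, the two-sided ideal generated by $\widehat{R}$ coincides with $I$, so the $\widehat{R}$-standard monomials span $\C\langle s_1,s_2,s_3\rangle/I\cong\C[G]$ by the proposition preceding Theorem \ref{cor-1}. Since $\dim_\C \C[G]=|H_3|=120$, we are reduced to showing that the set of $\widehat{R}$-standard monomials consists of exactly the $10\cdot 12=120$ products $M_2M_3$ in the statement: any smaller set cannot span, and any larger set is linearly dependent and forces the count down by the spanning property.

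First I would tabulate the leading monomials of $\widehat{R}$. The original relations contribute $s_1^2,\ s_2^2,\ s_3^2,\ s_3s_1,\ s_3s_2s_3,\ s_2s_1s_2s_1s_2$. For each of (a)--(g) in Lemma \ref{Lem_H3}, the first summand begins with $s_3$ while the second begins with $s_2$, so in the deg-lex order with $s_3>s_2>s_1$ the first summand is the leading term; this gives seven further leading words ranging from $s_3s_2s_1s_3$ to $(s_3s_2s_1s_2s_1)^2$.

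Next I would check that every monomial $M_2M_3$ in the list is $\widehat{R}$-standard. The factor $M_2$ avoids $s_1^2$, $s_2^2$ and $s_2s_1s_2s_1s_2$ by the $H_2$ analysis, and since every $s_3$-involving leading term begins with $s_3$, any forbidden subword in $M_2M_3$ must lie entirely in $M_3$ or start at the first letter of $M_3$ preceded by a single letter of $M_2$. A direct inspection of the twelve strings $M_3$ (each of which, apart from $1$, starts with $s_3$ and has all internal $s_3$'s prefixed by $s_2$) shows none of the eleven $s_3$-involving leading terms occurs.

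The principal obstacle is the converse: showing every $\widehat{R}$-standard monomial $w$ has the form $M_2M_3$. The strategy is to split $w=M_2w'$ where $M_2\in\langle s_1,s_2\rangle$ is the prefix before the first $s_3$, and to argue by induction on the number $k$ of $s_3$'s in $w'$ that $w'$ must be one of the twelve strings in (\ref{std_monomials_H3}). The base cases $k=0,1$ are immediate from the leading terms $s_3s_1$, $s_3s_2s_3$ and the $H_2$ reductions. For $k\geq 2$, the forbidden subwords $s_3s_2s_1s_3$, $s_3s_2s_1s_2s_3s_2$, $s_3s_2s_1s_2s_3s_2s_1$, $s_3s_2s_1s_2s_3s_2s_1s_2$, together with $s_3^2$, $s_3s_1$ and $s_3s_2s_3$, dictate the only possible blocks $s_3s_2\cdots$ that can follow a given $s_3$; tracking these block by block collapses the possibilities to the listed twelve. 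The count $10\cdot 12=120=|H_3|$ then matches $\dim\C[G]$, and Theorem \ref{cor-1}(c) gives the result.
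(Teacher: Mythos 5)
Your proposal is essentially the paper's own first proof: enumerate the $\widehat{R}$-standard monomials, observe that their number $10\cdot 12=120$ equals $|G|=\dim_\C\C[G]$, and combine the spanning property with Theorem \ref{cor-1}(c); your extra detail on leading monomials and the induction on the number of $s_3$'s just fills in the enumeration step that the paper asserts without elaboration (only take care to keep the leading word $(s_3s_2s_1s_2s_1)^2$ of Lemma \ref{Lem_H3}(g) in your list of forbidden subwords, since it is not implied by the others). The paper additionally records a second proof by verifying directly that all compositions in $\widehat{R}$ reduce to zero, which you do not need.
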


\begin{proof} We have two ways of proving this theorem.

(I) Counting the number of $\widehat{R}$-standard monomials:

It can be easily checked that the product of each $mM$ and $s_i$ $(i=1,2,3)$ is also one of the monomials $mM$ by the relations in $\widehat{R}$.

We enumerate all $\widehat{R}$-standard monomials in $\C\langle s_1,s_2,s_3\rangle$, which are in the above.
The number of $\widehat{R}$-standard monomials is $$
10\times 12=120,$$
which is exactly equal to the order of $G$ (See \cite[\S2.13]{Humphreys90}), the dimension of $\C[G]$ as a $\C$-vector space.
Hence, Theorem \ref{cor-1} leads us that the set $\widehat{R}$ is a \GS basis for $\C[G]$.

(II) Checking that $\widehat{R}$ is closed under composition:

Note that the set $\widehat{R}$ consists of thirteen polynomials. Following Theorem \ref{cor-1} (a), we show that $\widehat{R}$ is closed under composition.
It is proved by checking that
all possible compositions between any two polynomials reduce trivially.

For example, let $p=(s_3s_2s_1)s_3-s_2(s_3s_2s_1)$ in Lemma \ref{Lem_H3} (a) and $q=s_3s_2s_3-s_2s_3s_2$ in (\ref{defining_relations_H3}),
and consider the composition \begin{center}$(p,q)_w$\quad with $w=s_3s_2s_1s_3s_2s_3$.\end{center}
Then we check that $$
p(s_2s_3)-(s_3s_2s_1)q
=-s_2(s_3s_2s_1)(s_2s_3)+(s_3s_2s_1)s_2s_3s_2=0
$$
by Lemma \ref{Lem_H3} (d).

In this way, we check that all possible compositions reduce to 0. Therefore, the set of relations $\widehat{R}$ is closed under composition.
\end{proof}

\noindent {\it Remark}. (1) We have the multiplication table between the $\widehat{R}$-standard monomials.
For example, we multiply $(s_3s_2s_1s_2s_1)(s_3s_2s_1s_2)s_3$ by $s_1$ from the right, to obtain: $$
(s_3s_2s_1s_2s_1)(s_3s_2s_1s_2)s_3\ s_1=(s_3s_2s_1s_2s_1)^2s_3=s_2(s_3s_2s_1s_2s_1)(s_3s_2s_1s_2)s_3$$
by the commutative relation $s_3s_1=s_1s_3$ and the relation (g) in Lemma \ref{Lem_H3}.

(2) We find that the set of the above $12$ monomials in (\ref{std_monomials_H3}) forms the set of right coset representatives of the subgroup of type $H_2$ in the Coxeter group of type $H_3$.

(3) The monomial $$s_1s_2s_1s_2s_1(s_3s_2s_1s_2s_1)(s_3s_2s_1s_2)s_3$$ is the longest element in $G$,
which is of length $15$, the number of positive roots in a root system of type $H_3$.

(4) The 
full icosahedral group $G$ contains the icosahedral group 
as a normal subgroup of index $2$.
Note that the subgroup of rotational symmetries of a regular icosahedron is isomorphic to the $5$th alternating group $A_5$ of order $60$.
Since the elements of $A_5$ are of even degree, we have an explicit monomial expression for $60$ elements of the icosahedral group.

(5) An algorithm using rewriting rules is sketched for obtaining complete presentations of 
Coxeter groups \cite{Borges-Trenard_Perez-Roses, LeChenadec}.

(6) A noncrystallographic root 
lattice induces a 
quasicrystal structure, using the cut and project method, 
which realizes the structure as a projection of a certain strip by a root map in a higher dimensional lattice
\cite{ChenMoodyPatera}.

\section{Main results for $H_4$}

Let $G$ be the Coxeter group of type $H_4$, 
and take the complex field $\C$ as our base field. Then
the group algebra $\C[G]$ is generated by four elements $s_i\ (i=1,2,3,4)$
with the defining relations: \begin{eqnarray}
{\rm (quadratic\;\; relations)} & s_1^2=s_2^2=s_3^2=s_4^2 =1, \label{quadratic relations}\\
& (s_1s_2)^5=(s_2s_3)^3=(s_3s_4)^3 =1=(s_1s_3)^2=(s_1s_4)^2=(s_2s_4)^2.
\end{eqnarray}
We define our monomial order to be the degree-lexicographic order with $s_4>s_3>s_2>s_1$.
Combining the above relations, we get six braid relations:
\begin{eqnarray}
{\rm (braid\;\; relations)}&s_2s_1s_2s_1s_2=s_1s_2s_1s_2s_1,\  s_3s_2s_3=s_2s_3s_2,\  s_4s_3s_4=s_3s_4s_3  \label{braid relations}\\
&s_3s_1=s_1s_3,\ \ s_4s_1=s_1s_4,\ \ s_4s_2=s_2s_4. \nonumber
\end{eqnarray}
We denote by $R$ the set of relations (\ref{quadratic relations}) and (\ref{braid relations}), that is,
\begin{eqnarray}
R&=&\{\ s_1^2-1,\ \ s_2^2-1,\ \ s_3^2-1,\ \ s_4^2-1,\ \ \nonumber \\
&&\ \ s_2s_1s_2s_1s_2-s_1s_2s_1s_2s_1,\ s_3s_2s_3-s_2s_3s_2,\ s_4s_3s_4-s_3s_4s_3\  \nonumber  \\
&&\ \ s_3s_1-s_1s_3,\ \ s_4s_1-s_1s_4,\ \ s_4s_2-s_2s_4,\ \ \}.\nonumber \label{defining_relations}
\end{eqnarray}
Note that the group algebra $\C[G]$ is isomorphic to $\C\langle s_1,s_2,s_3,s_4\rangle /I$,
where $I$ is the two-sided ideal generated by $R$ in the free associative algebra $\C\langle s_1,s_2,s_3,s_4\rangle$.

\begin{lem}[\cite{BShiao
}] The following relations hold in $\C[G]$\,{\rm :} \label{Lem_H4_432}

{\rm (a)} $(s_4s_3s_2)s_4-s_3(s_4s_3s_2)=0$,

{\rm (b)} $(s_4s_3s_2)s_4s_3-s_3s_2(s_4s_3s_2)=0$.
\end{lem}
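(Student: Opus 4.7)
The plan is to establish both identities by the same kind of braid manipulations used in Lemma \ref{Lem_H3}(a)--(b), exploiting the fact that the sub-relations $s_3s_2s_3=s_2s_3s_2$, $s_4s_3s_4=s_3s_4s_3$, and $s_4s_2=s_2s_4$ make $\{s_2,s_3,s_4\}$ behave like an $A_3$-type triple. No composition-reducibility arguments or counting of monomials are needed; each identity is a short chain of rewrites.

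For (a), I would start from $(s_4s_3s_2)s_4$, move $s_2$ past $s_4$ using the commutativity $s_4s_2=s_2s_4$ to land on $s_4s_3s_4s_2$, then apply the braid relation $s_4s_3s_4=s_3s_4s_3$ to obtain $s_3s_4s_3s_2=s_3(s_4s_3s_2)$. That is the entire derivation.

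For (b), the cleanest route is to bootstrap from (a). Multiplying (a) on the right by $s_3$ gives $(s_4s_3s_2)s_4s_3=s_3(s_4s_3s_2)s_3$, so it suffices to show $(s_4s_3s_2)s_3=s_2(s_4s_3s_2)$. I would obtain this by writing $s_4s_3s_2s_3=s_4(s_3s_2s_3)=s_4s_2s_3s_2$ via the braid relation $s_3s_2s_3=s_2s_3s_2$, and then applying $s_4s_2=s_2s_4$ to pull $s_2$ to the front, yielding $s_2s_4s_3s_2=s_2(s_4s_3s_2)$. Substituting back gives (b).

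The only thing to be mindful of is bookkeeping: each step uses exactly one of the three relations $s_4s_2=s_2s_4$, $s_3s_2s_3=s_2s_3s_2$, $s_4s_3s_4=s_3s_4s_3$, and the others among the defining relations of $R$ play no role. There is no genuine obstacle — the arguments are entirely analogous to the $H_3$ case, with indices shifted up by one — so I expect the proof to take only a few lines.
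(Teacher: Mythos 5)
Your proof is correct. The paper itself gives no proof of this lemma (it is quoted from the reference [BShiao]), but your two rewriting chains are exactly the style of argument the paper uses for the analogous statements -- compare the proof of Lemma \ref{Lem_H3}(a)--(b) and of Lemma \ref{Lem_H4_4321} -- so this is essentially the same approach, just written out where the paper defers to a citation.
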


A reducible monomial is called minimal if its every proper submonomial is not reducible.
Our strategy to find relations is considering all minimal reducible monomials until we obtain the smallest set of standard monomials.
We check the relations in order of the degree of a monomial, one by one.
While the leading monomial of the above relation (b) is not minimal, we are writing explicitly for 
subsequent computations.

\begin{lem} The following relations hold in $\C[G]$\,{\rm :} \label{Lem_H4_4321}

{\rm (a)} $(s_4s_3s_2s_1)s_4-s_3(s_4s_3s_2s_1)=0$,

{\rm (b)} $(s_4s_3s_2s_1)s_4s_3-s_3s_2(s_4s_3s_2s_1)=0$,

{\rm (c)} $(s_4s_3s_2s_1)s_4s_3s_2-s_3s_2(s_4s_3s_2s_1s_2)=0$,

{\rm (d)} $(s_4s_3s_2s_1)^2-s_3s_2(s_4s_3s_2s_1s_2s_1)=0$.
\end{lem}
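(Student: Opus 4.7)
The plan is to mirror the proof strategy of Lemma \ref{Lem_H3}: establish (a) directly from the braid/commutation relations in $R$, and then derive (b), (c), (d) successively by right-multiplying the previous identity by a single generator, invoking a short auxiliary commutation whenever that generator must be pushed past the block $s_4s_3s_2s_1$.

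For (a), I would start from $(s_4s_3s_2s_1)s_4$ and slide the trailing $s_4$ leftward across $s_1$ and $s_2$ using the commutations $s_4s_1=s_1s_4$ and $s_4s_2=s_2s_4$. This produces $s_4s_3s_4s_2s_1$, and the braid relation $s_4s_3s_4=s_3s_4s_3$ then gives exactly $s_3(s_4s_3s_2s_1)$.

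For (b), I would right-multiply (a) by $s_3$ to get $(s_4s_3s_2s_1)s_4s_3=s_3(s_4s_3s_2s_1)s_3$, and then reduce the right-hand side via the auxiliary identity
\[
(s_4s_3s_2s_1)s_3 \;=\; s_2(s_4s_3s_2s_1).
\]
This identity chains three moves: use $s_1s_3=s_3s_1$ to bring $s_3$ next to $s_2$, apply the braid $s_3s_2s_3=s_2s_3s_2$, and finally use $s_4s_2=s_2s_4$ to push $s_2$ to the far left. Relations (c) and (d) are then obtained by right-multiplying (b) by $s_2$ and (c) by $s_1$, respectively; in both cases no further rewriting is required, since the appended generator simply extends the parenthesized block $s_4s_3s_2s_1s_2\cdots$ on the right-hand side.

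The main technical step is the auxiliary identity used in the derivation of (b); once that is established, the remaining parts are pure bookkeeping, exactly parallel to the \emph{``just multiplying the previous relation from the right''} observation in the proof of Lemma \ref{Lem_H3}. No composition check with elements of $R$ is needed at this stage — we are simply verifying that these four polynomials vanish in $\C[G]$, which reduces entirely to iterated applications of the braid and commutation relations already listed in (\ref{braid relations}).
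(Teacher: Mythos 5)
Your proposal is correct and follows essentially the same route as the paper: part (a) is obtained by commuting the trailing $s_4$ past $s_1$ and $s_2$ and applying $s_4s_3s_4=s_3s_4s_3$, and (b)--(d) follow by successive right multiplication by $s_3,s_2,s_1$. The only difference is that you spell out the auxiliary identity $(s_4s_3s_2s_1)s_3=s_2(s_4s_3s_2s_1)$ needed in step (b), which the paper leaves implicit under ``using the braid relations''; your verification of it is accurate.
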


\begin{proof} (a) The relation follow from the commutativity of $s_4$ with $s_i$ for $i=1,2$ and the braid relation $s_4s_3s_4=s_3s_4s_3$.

(b)$\sim$(d) Multiplying (a) by the alphabet $s_3$ from the right and using the braid relations, we have the relation (b).
Consecutively, we multiply by an alphabet from the right to obtain (c) and (d).
\end{proof}

\begin{lem} The following relations hold in $\C[G]$\,{\rm :} \label{Lem_H4_43212}

{\rm (a)} $(s_4s_3s_2s_1s_2)s_4-s_3(s_4s_3s_2s_1s_2)=0$,

{\rm (b)} $(s_4s_3s_2s_1s_2)s_4s_3-s_3(s_4s_3s_2s_1s_2s_3)=0$,

{\rm (c)} $(s_4s_3s_2s_1s_2)s_4s_3s_2-s_3s_2(s_4s_3s_2s_1s_2s_3)=0$,

{\rm (d)} $(s_4s_3s_2s_1s_2)s_4s_3s_2s_1-s_3s_2(s_4s_3s_2s_1s_2s_1s_3)=0$,

{\rm (e)} $(s_4s_3s_2s_1s_2)^2-s_3s_2(s_4s_3s_2s_1s_2s_1s_3s_2)=0$.
\end{lem}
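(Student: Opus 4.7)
The plan is to mirror the inductive/telescoping pattern that the authors already used in Lemmas \ref{Lem_H3}, \ref{Lem_H4_432} and \ref{Lem_H4_4321}: establish part (a) directly from the defining relations, and then obtain parts (b)--(e) by successively right--multiplying by single generators and absorbing the new generator into a braid or commutation rewrite.

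For part (a), I would take the left-hand side $(s_4s_3s_2s_1s_2)s_4$ and slide the trailing $s_4$ leftwards past $s_2$ and $s_1$ using $s_4s_2=s_2s_4$ and $s_4s_1=s_1s_4$, producing $s_4s_3s_4s_2s_1s_2$. Then the braid $s_4s_3s_4=s_3s_4s_3$ gives $s_3s_4s_3s_2s_1s_2=s_3(s_4s_3s_2s_1s_2)$, which is exactly the right-hand side of (a).

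For (b), right--multiplication of (a) by $s_3$ is formal: the word $s_3\cdot(s_4s_3s_2s_1s_2)\cdot s_3$ is literally $s_3(s_4s_3s_2s_1s_2s_3)$, with nothing to rewrite. For (c), right--multiply (b) by $s_2$ and rewrite the tail $s_2s_3s_2$ using the braid $s_2s_3s_2=s_3s_2s_3$; then commute the resulting $s_1$ past $s_3$ via $s_1s_3=s_3s_1$, apply $s_3s_2s_3=s_2s_3s_2$ again, and finally push the freshly created $s_4$ past $s_2$ using $s_4s_2=s_2s_4$. This rearranges $s_3s_4s_3s_2s_1s_2s_3s_2$ into $s_3s_2s_4s_3s_2s_1s_2s_3$, which is the stated right-hand side. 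For (d), right--multiply (c) by $s_1$; the only nontrivial move is $s_3s_1=s_1s_3$ inside the parenthesized block, which turns $s_4s_3s_2s_1s_2s_3s_1$ into $s_4s_3s_2s_1s_2s_1s_3$. Finally (e) is formal once (d) is in hand: $(s_4s_3s_2s_1s_2)^2$ equals $(s_4s_3s_2s_1s_2)s_4s_3s_2s_1s_2$, so right--multiplying (d) by $s_2$ gives exactly $s_3s_2(s_4s_3s_2s_1s_2s_1s_3s_2)$.

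There is no real obstacle here; the content is entirely bookkeeping, and the only place where one has to be careful is step (c), where three rewrites (one $s_2s_3s_2$--braid, one $s_1s_3$--commutation, one further $s_2s_3s_2$--braid, and one $s_4s_2$--commutation) must be performed in the right order to land on the stated form. All the moves used are either the commutativity of $s_4$ with $s_1$ and $s_2$, the commutativity of $s_1$ with $s_3$, the braid $s_2s_3s_2=s_3s_2s_3$, or the braid $s_4s_3s_4=s_3s_4s_3$, so each step stays inside $R$.
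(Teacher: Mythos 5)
Your proof is correct and follows exactly the paper's own route: part (a) by commuting $s_4$ past $s_1,s_2$ and applying $s_4s_3s_4=s_3s_4s_3$, then (b)--(e) by successive right-multiplication and braid/commutation rewrites. You have merely spelled out the bookkeeping that the paper's proof leaves implicit.
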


\begin{proof} We get the relation (a) by applying the commutativity of $s_4$ with $s_i$ for $i=1,2$ and the braid relation $s_4s_3s_4=s_3s_4s_3$.
Similarly, the other relations are deduced from the braid relations.
\end{proof}

\begin{lem} The following relations hold in $\C[G]$\,{\rm :} \label{Lem_H4_432123}

{\rm (a)} $(s_4s_3s_2s_1s_2s_3)s_2-s_2(s_4s_3s_2s_1s_2s_3)=0$,

{\rm (b)} $(s_4s_3s_2s_1s_2s_3)s_4s_3-s_3(s_4s_3s_2s_1s_2s_3)s_4=0$,

{\rm (c)} $(s_4s_3s_2s_1s_2s_3)s_4s_3s_2-s_3s_2(s_4s_3s_2s_1s_2s_3)s_4=0$,

{\rm (d)} $(s_4s_3s_2s_1s_2s_3)s_4s_3s_2s_1-s_3s_2(s_4s_3s_2s_1s_2s_1s_3)s_4=0$,

{\rm (e)} $(s_4s_3s_2s_1s_2s_3)s_4s_3s_2s_1s_2-s_3s_2(s_4s_3s_2s_1s_2s_1s_3s_2)s_4=0$,

{\rm (f)} $(s_4s_3s_2s_1s_2s_3)^2-s_3s_2(s_4s_3s_2s_1s_2s_1s_3s_2)s_4s_3=0$.
\end{lem}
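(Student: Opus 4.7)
The plan is to follow the same cascading strategy that was used for Lemmas \ref{Lem_H4_4321} and \ref{Lem_H4_43212}: establish (a) and (b) as base cases by direct word manipulation, then deduce (c)--(f) by right-multiplying the previously established identity by a single generator and simplifying via the braid and commutativity relations in $R$.

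For (a), I would start from the right-hand side $s_2(s_4s_3s_2s_1s_2s_3)$, first slide $s_2$ past $s_4$ using $s_2s_4=s_4s_2$, then apply the braid $s_2s_3s_2=s_3s_2s_3$ at the interior triple, then commute $s_3$ past $s_1$ using $s_3s_1=s_1s_3$, and finally apply $s_3s_2s_3=s_2s_3s_2$ once more. This four-move sequence transforms the right-hand side into the left-hand side. For (b), the key observation is that the trailing $s_3s_4s_3$ in the left-hand side can be rewritten as $s_4s_3s_4$; the extra $s_4$ then commutes leftward through $s_2$ and $s_1$ (using $s_2s_4=s_4s_2$ and $s_1s_4=s_1s_4$... i.e.\ $s_1s_4=s_4s_1$), producing the pattern $s_4s_3s_4$ at the front, which one last application of $s_4s_3s_4=s_3s_4s_3$ converts to the right-hand side.

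The remaining items (c)--(f) are obtained in order, each by right-multiplying the previous identity by a single generator and performing a short simplification: multiply (b) by $s_2$ and invoke $s_4s_2=s_2s_4$ to reach (c); multiply (c) by $s_1$ and use $s_4s_1=s_1s_4$ together with $s_3s_1=s_1s_3$ to reach (d); multiply (d) by $s_2$ and again apply $s_4s_2=s_2s_4$ to reach (e); finally right-multiply (e) by $s_3$ to reach (f). No new braid identities are needed beyond the ones already in $R$ and what was used for (a)--(b).

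There is no real obstacle here, only bookkeeping: one must keep straight which generator pairs commute ($s_1$ with $s_3,s_4$ and $s_2$ with $s_4$) and where each braid relation is legal to apply. The single computation that requires the most care is (b), since it is the one that genuinely exploits $s_4s_3s_4=s_3s_4s_3$ and must shuttle an $s_4$ across the full word; once (b) is in hand, (c)--(f) are purely mechanical and can be read off by successive right-multiplication as indicated.
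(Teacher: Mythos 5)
Your proposal is correct and follows essentially the same route as the paper: parts (a) and (b) are verified directly with the braid relations $s_2s_3s_2=s_3s_2s_3$, $s_1s_3=s_3s_1$, $s_4s_2=s_2s_4$ and $s_3s_4s_3=s_4s_3s_4$ (your two computations coincide with the paper's), and (c)--(f) are obtained by successive right multiplication by a single generator followed by the indicated commutations. The only detail left implicit is that in passing from (b) to (c) one also needs part (a) (not just $s_4s_2=s_2s_4$) to slide the $s_2$ past the block $s_4s_3s_2s_1s_2s_3$, which is covered by your remark that only the moves already used for (a)--(b) are required.
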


\begin{proof} (a) We have the relation (a) by the braid relations $s_2s_3s_2=s_3s_2s_3$, $s_1s_3=s_3s_1$ and $s_4s_2=s_2s_4$.

(b) Using the braid relation $s_3s_4s_3=s_4s_3s_4$ and the commutativity of $s_4$ with $s_i$ for $i=1,2$, we calculate that
$$s_4s_3s_2s_1s_2(s_3s_4s_3)=s_4s_3s_2s_1s_2(s_4s_3s_4)=(s_4s_3s_4)s_2s_1s_2s_3s_4=(s_3s_4s_3)s_2s_1s_2s_3s_4.$$

(c)$\sim$(f) We multiply the previous relation by a reflection $s_i$ from the right and use the braid relations, obtaining our relations.
\end{proof}

\begin{lem} The following relations hold in $\C[G]$\,{\rm :} \label{Lem_H4_432121}

{\rm (a)} $(s_4s_3s_2s_1s_2s_1)s_4-s_3(s_4s_3s_2s_1s_2s_1)=0$,

{\rm (b)} $(s_4s_3s_2s_1s_2s_1)s_4s_3-s_3(s_4s_3s_2s_1s_2s_1s_3)=0$,

{\rm (c)} $(s_4s_3s_2s_1s_2s_1)s_4s_3s_2-s_3(s_4s_3s_2s_1s_2s_1s_3s_2)=0$,

{\rm (d)} $(s_4s_3s_2s_1s_2s_1)s_4s_3s_2s_1-s_3(s_4s_3s_2s_1s_2s_1s_3s_2s_1)=0$,

{\rm (e)} $(s_4s_3s_2s_1s_2s_1)s_4s_3s_2s_1s_2-s_3(s_4s_3s_2s_1s_2s_1s_3s_2s_1s_2)=0$,

{\rm (f)} $(s_4s_3s_2s_1s_2s_1)s_4s_3s_2s_1s_2s_3-s_3(s_4s_3s_2s_1s_2s_1s_3s_2s_1s_2s_3)=0$,

{\rm (g)} $(s_4s_3s_2s_1s_2s_1)^2-s_3s_2(s_4s_3s_2s_1s_2s_1s_3s_2s_1s_2)=0$.
\end{lem}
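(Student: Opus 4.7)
The plan is to follow the same inductive strategy already used in Lemmas \ref{Lem_H4_4321}, \ref{Lem_H4_43212}, and \ref{Lem_H4_432123}: prove (a) from the defining braid and commutativity relations, then obtain (b) through (f) by successively right-multiplying the previous identity by a single generator and collapsing using commutativities, and finally derive (g) from (e) with one essential invocation of the $H_2$ braid relation $s_1s_2s_1s_2s_1=s_2s_1s_2s_1s_2$.

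For (a), the key observation is that $s_4$ commutes with $s_1$ and $s_2$, so it can be slid to the left through the tail $s_2s_1s_2s_1$:
\[
(s_4s_3s_2s_1s_2s_1)s_4 \;=\; s_4s_3\,s_4\,(s_2s_1s_2s_1).
\]
The braid relation $s_4s_3s_4=s_3s_4s_3$ then converts this into $s_3(s_4s_3s_2s_1s_2s_1)$, giving (a).

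For (b)--(f), I would right-multiply (a), (b), (c), (d), (e) by $s_3$, $s_2$, $s_1$, $s_2$, $s_3$ respectively. In each case the left factor $s_3$ already sits outside the parenthesis, and the new generator on the right simply augments the word inside the parenthesis on the right-hand side, so nothing more than the rebracketing is needed. (In (d), the step from $s_3(\cdots)s_1$ to $s_3(\cdots s_1)$ is a rebracketing, and the relation $s_1s_3=s_3s_1$ only enters implicitly when the reader later reduces the word further; here I only need the formal inclusion of the extra letter inside the parenthesis.)

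The main obstacle is (g). Right-multiplying (e) by $s_1$ gives
\[
(s_4s_3s_2s_1s_2s_1)^2 \;=\; s_3\bigl(s_4s_3s_2s_1s_2s_1s_3s_2s_1s_2\bigr)s_1,
\]
so it suffices to prove the internal identity
\[
s_4s_3s_2s_1s_2s_1s_3s_2s_1s_2s_1 \;=\; s_2\cdot s_4s_3s_2s_1s_2s_1s_3s_2s_1s_2.
\]
Pushing the $s_2$ on the right-hand side leftward through $s_4$ (commutativity), applying $s_2s_3s_2=s_3s_2s_3$, and then $s_3s_1=s_1s_3$, this reduces after cancelling the common prefix $s_4s_3s_2s_1$ to
\[
s_2s_1s_3s_2s_1s_2s_1 \;=\; s_3s_2s_1s_3s_2s_1s_2.
\]
Now the left-hand side rewrites, using $s_1s_3=s_3s_1$ followed by the pentagonal $H_2$ braid $s_1s_2s_1s_2s_1=s_2s_1s_2s_1s_2$, as $s_2s_3(s_2s_1s_2s_1s_2)$; one further use of $s_2s_3s_2=s_3s_2s_3$ and of $s_3s_1=s_1s_3$ matches the right-hand side. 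This is the only place in the lemma where the $H_2$ pentagonal braid is used, and it is the step I expect to be the delicate part of the argument.
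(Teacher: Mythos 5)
Your proof is correct and takes essentially the same route as the paper's: (a) by sliding $s_4$ leftward through $s_2s_1s_2s_1$ and applying $s_4s_3s_4=s_3s_4s_3$, (b)--(f) by successive right multiplication with only rebracketing needed, and (g) by exactly the same combination of $s_2s_4=s_4s_2$, $s_2s_3s_2=s_3s_2s_3$, $s_1s_3=s_3s_1$ and the pentagonal braid $s_1s_2s_1s_2s_1=s_2s_1s_2s_1s_2$. The only cosmetic difference is that you cancel the common prefix $s_4s_3s_2s_1$ and match the two suffixes, whereas the paper rewrites the left-hand side forward in one chain; the verification checks out either way.
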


\begin{proof} (a) The commutativity of $s_4$ with $s_i$ for $i=1,2$ and the braid relation $s_4s_3s_4=s_3s_4s_3$ leads us the relation.

(b)$\sim$(f) The previous relation is multiplied from the right by a reflection to produce our relation inductively.

(g) Multiplying the relation (e) by $s_1$ from the right and applying $s_1s_2s_1s_2s_1=s_2s_1s_2s_1s_2$ and $s_3s_2s_3=s_2s_3s_2$, we obtain that
$$\begin{aligned}
&(s_4s_3s_2s_1s_2s_1)s_4s_3s_2s_1s_2s_1=s_3(s_4s_3s_2s_1s_2s_1s_3s_2s_1s_2)s_1=s_3(s_4s_3s_2s_1s_2s_3) s_1s_2s_1s_2s_1\\
&=s_3(s_4s_3s_2s_1s_2s_3) s_2s_1s_2s_1s_2=s_3(s_4s_3s_2s_1)s_3s_2s_3 s_1s_2s_1s_2=s_3s_2(s_4s_3s_2s_1s_2 s_1s_3s_2s_1s_2).
\end{aligned}$$
\end{proof}

\begin{lem} The following relations hold in $\C[G]$\,{\rm :} \label{Lem_H4_4321213}

{\rm (a)} $(s_4s_3s_2s_1s_2s_1s_3)s_4s_3-s_3(s_4s_3s_2s_1s_2s_1s_3)s_4=0$,

{\rm (b)} $(s_4s_3s_2s_1s_2s_1s_3)s_4s_3s_2-s_3(s_4s_3s_2s_1s_2s_1s_3s_2)s_4=0$,

{\rm (c)} $(s_4s_3s_2s_1s_2s_1s_3)s_4s_3s_2s_1-s_3(s_4s_3s_2s_1s_2s_1s_3s_2s_1)s_4=0$,

{\rm (d)} $(s_4s_3s_2s_1s_2s_1s_3)s_4s_3s_2s_1s_2-s_3(s_4s_3s_2s_1s_2s_1s_3s_2s_1s_2)s_4=0$,

{\rm (e)} $(s_4s_3s_2s_1s_2s_1s_3)s_4s_3s_2s_1s_2s_3-s_3(s_4s_3s_2s_1s_2s_1s_3s_2s_1s_2)s_4s_3=0$,

{\rm (f)} $(s_4s_3s_2s_1s_2s_1s_3)s_4s_3s_2s_1s_2s_1-s_3s_2(s_4s_3s_2s_1s_2s_1s_3s_2s_1s_2)s_4=0$,

{\rm (g)} $(s_4s_3s_2s_1s_2s_1s_3)^2-s_3s_2(s_4s_3s_2s_1s_2s_1s_3s_2s_1s_2)s_4s_3=0$.
\end{lem}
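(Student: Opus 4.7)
The plan is to follow the bootstrapping pattern already employed in Lemmas~\ref{Lem_H4_432}--\ref{Lem_H4_432121}: prove (a) directly from the basic braid relations, derive (b)--(e) inductively by right-multiplication and commutativity, and then handle (f) and (g) by combining a right-multiplication with the identities of Lemma~\ref{Lem_H4_432121}. For (a), I would isolate the suffix $s_3s_4s_3$ and apply the braid relation $s_3s_4s_3=s_4s_3s_4$ to obtain
\[
(s_4s_3s_2s_1s_2s_1s_3)s_4s_3 \;=\; s_4s_3s_2s_1s_2s_1\,(s_4s_3s_4).
\]
The middle $s_4$ then commutes leftward through the block $s_2s_1s_2s_1$ using $s_4s_1=s_1s_4$ and $s_4s_2=s_2s_4$, producing $(s_4s_3s_4)\,s_2s_1s_2s_1\,s_3s_4$, and a second application of $s_4s_3s_4=s_3s_4s_3$ delivers the right-hand side $s_3(s_4s_3s_2s_1s_2s_1s_3)s_4$.

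Relations (b)--(e) should then follow by a short induction, each obtained from the previous identity by right-multiplication by a single generator together with one commutativity move: (b)$=$(a)$\cdot s_2$ with $s_4s_2=s_2s_4$; (c)$=$(b)$\cdot s_1$ with $s_4s_1=s_1s_4$; (d)$=$(c)$\cdot s_2$ with $s_4s_2=s_2s_4$; and (e)$=$(d)$\cdot s_3$, for which no further rewriting is required.

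The hard part will be (f). Right-multiplying (d) by $s_1$ and using $s_4s_1=s_1s_4$ reduces (f) to the auxiliary identity
\[
(s_4s_3s_2s_1s_2s_1s_3s_2s_1s_2)\,s_1 \;=\; s_2\,(s_4s_3s_2s_1s_2s_1s_3s_2s_1s_2),
\]
which is not apparent from the defining relations alone. To establish it, I would right-multiply Lemma~\ref{Lem_H4_432121}(e) by $s_1$ to obtain $(s_4s_3s_2s_1s_2s_1)^2 = s_3(s_4s_3s_2s_1s_2s_1s_3s_2s_1s_2)\,s_1$, and then invoke Lemma~\ref{Lem_H4_432121}(g), which asserts $(s_4s_3s_2s_1s_2s_1)^2 = s_3s_2(s_4s_3s_2s_1s_2s_1s_3s_2s_1s_2)$; cancelling the leading $s_3$ (legitimate since $s_3^2=1$) yields the auxiliary identity, and hence (f).

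Finally, (g) should drop out of (f) by right-multiplication by $s_3$: the trailing $s_4s_3s_2s_1s_2s_1\cdot s_3$ on the left coalesces into $s_4s_3s_2s_1s_2s_1s_3$, giving $(s_4s_3s_2s_1s_2s_1s_3)^2$, while the right-hand side becomes $s_3s_2(s_4s_3s_2s_1s_2s_1s_3s_2s_1s_2)s_4s_3$, exactly as claimed.
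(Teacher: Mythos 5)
Your proposal is correct and follows essentially the same bootstrapping route as the paper: (a) from $s_3s_4s_3=s_4s_3s_4$ plus the commutativity of $s_4$ with $s_1,s_2$, then (b)--(e) by successive right-multiplication, and (g) by right-multiplying (f) by $s_3$. The only (harmless) divergence is in (f): the paper obtains the auxiliary identity $(s_4s_3s_2s_1s_2s_1s_3s_2s_1s_2)s_1=s_2(s_4s_3s_2s_1s_2s_1s_3s_2s_1s_2)$ directly from Lemma~\ref{Lem_H3}~(g) together with $s_4s_2=s_2s_4$, whereas you deduce it by comparing Lemma~\ref{Lem_H4_432121}~(e) multiplied by $s_1$ with Lemma~\ref{Lem_H4_432121}~(g) and cancelling the leading $s_3$ --- both arguments are valid and ultimately rest on the same $H_3$ relation.
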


\begin{proof} (a) We use the braid relation $s_3s_4s_3=s_4s_3s_4$ and the commutativity of $s_4$ with $s_i$ for $i=1,2$
to have the relation (a).

(b)$\sim$(e) Multiplying the previous relation by a reflection from the right, we get our relation inductively.

(f) We multiply the relation (d) by $s_1$ from the right. Then we apply $s_4s_1=s_1s_4$ and the relation (g) in Lemma \ref{Lem_H3}
to obtain our relation from $s_4s_2=s_2s_4$.

(g) The relation (f) is just multiplied from the right by $s_3$ to lead us our relation.
\end{proof}

\begin{lem} The following relations hold in $\C[G]$\,{\rm :} \label{Lem_H4_43212132}

{\rm (a)} $(s_4s_3s_2s_1s_2s_1s_3s_2)s_4s_3s_2-s_3(s_4s_3s_2s_1s_2s_1s_3s_2)s_4s_3=0$,

{\rm (b)} $(s_4s_3s_2s_1s_2s_1s_3s_2)s_4s_3s_2s_1-s_3(s_4s_3s_2s_1s_2s_1s_3s_2s_1)s_4s_3=0$,

{\rm (c)} $(s_4s_3s_2s_1s_2s_1s_3s_2)s_4s_3s_2s_1s_2-s_3(s_4s_3s_2s_1s_2s_1s_3s_2s_1)s_4s_3s_2=0$,

{\rm (d)} $(s_4s_3s_2s_1s_2s_1s_3s_2)s_4s_3s_2s_1s_2s_3-s_3(s_4s_3s_2s_1s_2s_1s_3s_2s_1s_2)s_4s_3s_2=0$,

{\rm (e)} $(s_4s_3s_2s_1s_2s_1s_3s_2)s_4s_3s_2s_1s_2s_1-s_3(s_4s_3s_2s_1s_2s_1s_3s_2s_1)s_4s_3s_2s_1=0$,

{\rm (f)} $(s_4s_3s_2s_1s_2s_1s_3s_2)s_4s_3s_2s_1s_2s_1s_3-s_3(s_4s_3s_2s_1s_2s_1s_3s_2s_1s_2)s_4s_3s_2s_1=0$,

{\rm (g)} $(s_4s_3s_2s_1s_2s_1s_3s_2)^2-s_3(s_4s_3s_2s_1s_2s_1s_3s_2s_1s_2)s_4s_3s_2s_1s_2=0$.
\end{lem}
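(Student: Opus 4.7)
The plan is to follow the inductive template used throughout Lemmas \ref{Lem_H4_432}--\ref{Lem_H4_4321213}. Writing $u = s_4 s_3 s_2 s_1 s_2 s_1 s_3 s_2$, each of the seven identities has the shape $u\cdot w_1 = s_3 \cdot u' \cdot w_2$ for specific words $w_1, w_2$ and a prescribed expansion $u'$ of $u$. I would first establish (a) by a direct migration of the interior $s_4$ in $u \cdot s_4 s_3 s_2$ leftward through $u$: repeatedly use the commutations $s_4 s_2 = s_2 s_4$ and $s_4 s_1 = s_1 s_4$ to slide $s_4$ past the $s_1$'s and $s_2$'s, and apply the braid $s_3 s_4 s_3 = s_4 s_3 s_4$ whenever $s_4$ meets adjacent $s_3$'s. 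The computation terminates with a leading $s_4 s_3 s_4$ which, reshaped to $s_3 s_4 s_3$ by the same braid, produces $s_3 \cdot u \cdot s_4 s_3$ as required.

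Once (a) is in hand, each of (b)--(f) is obtained by multiplying a previously established identity on the right by a single generator and rearranging the new right-hand side using only the braid relations $s_3 s_4 s_3 = s_4 s_3 s_4$, $s_2 s_3 s_2 = s_3 s_2 s_3$, $s_1 s_2 s_1 s_2 s_1 = s_2 s_1 s_2 s_1 s_2$ and the commutations $s_3 s_1 = s_1 s_3$, $s_4 s_1 = s_1 s_4$, $s_4 s_2 = s_2 s_4$. For instance, (b) follows from (a) by right multiplication by $s_1$ together with the commutations $s_3 s_1 = s_1 s_3$ and $s_4 s_1 = s_1 s_4$; (c) follows from (b) by $s_2$; (d) follows from (c) by $s_3$, absorbing the trailing $s_3$ into $u'$ via $s_2 s_3 s_2 = s_3 s_2 s_3$ and $s_4 s_2 = s_2 s_4$; and (e), (f) are analogous. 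For (g), the key observation is that $u$ itself ends in $s_2$, so multiplying (f) on the right by $s_2$ collapses the left-hand side to $u \cdot s_4 s_3 s_2 s_1 s_2 s_1 s_3 s_2 = u \cdot u = u^2$, and the identity drops out immediately.

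The main obstacle is clerical rather than conceptual. Since the monomials involved have length up to fifteen, one must execute each chain of braid and commutation moves with care and choose the correct generator to append at each inductive step; a single mis-commutation silently breaks the chain. Because each target right-hand side in (a)--(g) is prescribed in advance and only the six moves above are available, every verification is finite and mechanical, and no new composition or unexpected identity is required beyond what is already in play for the preceding lemmas.
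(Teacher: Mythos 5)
Your proposal is correct and follows essentially the same route as the paper: the paper likewise derives (b)--(g) by right-multiplying previously established parts by a single generator and rearranging with the braid and commutation relations (with (e) obtained from (c) and (g) from (f) by appending $s_2$, exactly as you observe). The only divergence is in (a), where the paper shortcuts your letter-by-letter migration by invoking Lemma~\ref{Lem_H4_4321213}(b) together with $s_2s_4=s_4s_2$ and $s_2s_3s_2=s_3s_2s_3$; note that your direct migration also requires the braid $s_2s_3s_2=s_3s_2s_3$ on the tail (not just the $s_3s_4s_3=s_4s_3s_4$ braid and the commutations you list for that step) before the pattern $s_3s_4s_3$ can be formed.
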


\begin{proof} (a) Using the commutativity $s_2s_4=s_4s_2$, the braid relation $s_2s_3s_2=s_3s_2s_3$ and
the relation (b) in Lemma \ref{Lem_H4_4321213}, we have: $$\begin{aligned}
&(s_4s_3s_2s_1s_2s_1s_3s_2)s_4s_3s_2=(s_4s_3s_2s_1s_2s_1s_3)s_4s_2s_3s_2\\
&=(s_4s_3s_2s_1s_2s_1s_3)s_4s_3s_2s_3
=s_3(s_4s_3s_2s_1s_2s_1s_3s_2)s_4s_3.\end{aligned}$$

(b) The relation (a) is multiplied by a reflection $s_1$ from the right to produce the relation (b)
after the commutativity of $s_1$ with $s_4s_3$ is applied.

(c) This is just from the multiplication of (b) by $s_2$ from the right.

(d) Multiplying (c) by $s_3$ from the right and using $s_3s_2s_3=s_2s_3s_2$ and $s_4s_2=s_2s_4$, we get our relation.

(e) This relation follows from multiplying (c) by $s_1$ from the right.

(f) We multiply the previous relation by $s_3$ from the right and use the commutativity and the braid relation,
giving us the relation.

(g) This is from the multiplication of (f) by $s_2$.
\end{proof}

\begin{lem} The following relations hold in $\C[G]$\,{\rm :} \label{Lem_H4_432121321}

{\rm (a)} $(s_4s_3s_2s_1s_2s_1s_3s_2s_1)s_4s_3s_2s_1s_2s_1-s_3(s_4s_3s_2s_1s_2s_1s_3s_2s_1)s_4s_3s_2s_1s_2=0$,

{\rm (b)} $(s_4s_3s_2s_1s_2s_1s_3s_2s_1)s_4s_3s_2s_1s_2s_1s_3-s_3(s_4s_3s_2s_1s_2s_1s_3s_2s_1)s_4s_3s_2s_1s_2s_3=0$,

{\rm (c)} $(s_4s_3s_2s_1s_2s_1s_3s_2s_1)s_4s_3s_2s_1s_2s_1s_3s_2-s_3(s_4s_3s_2s_1s_2s_1s_3s_2s_1s_2)s_4s_3s_2s_1s_2s_3=0$,

{\rm (d)} $(s_4s_3s_2s_1s_2s_1s_3s_2s_1)^2-s_3(s_4s_3s_2s_1s_2s_1s_3s_2s_1s_2)s_4s_3s_2s_1s_2s_1s_3=0$.
\end{lem}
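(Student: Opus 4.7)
The plan is to establish (a)--(d) inductively, in the same style as Lemmas \ref{Lem_H4_432}--\ref{Lem_H4_43212132}: each relation is obtained by multiplying the preceding one (or a suitable earlier identity) on the right by a single reflection $s_i$, then normalizing using the quadratic relations $s_i^2=1$, the commutations $s_3s_1=s_1s_3$, $s_4s_1=s_1s_4$, $s_4s_2=s_2s_4$, the short braids $s_3s_2s_3=s_2s_3s_2$ and $s_4s_3s_4=s_3s_4s_3$, and the pentagon braid $s_1s_2s_1s_2s_1=s_2s_1s_2s_1s_2$.

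For (a), I would multiply Lemma \ref{Lem_H4_43212132}(e) on the right by $s_2$. The right-hand side lands in the stated form without further work, while on the left the outer tail is $s_4s_3s_2s_1s_2s_1s_2$: commuting $s_4$ and $s_3$ past $s_1$ via $s_4s_1=s_1s_4$ and $s_3s_1=s_1s_3$ and then firing the pentagon braid absorbs one extra $s_1$ into the parenthesized block, turning $(s_4s_3s_2s_1s_2s_1s_3s_2)$ into $(s_4s_3s_2s_1s_2s_1s_3s_2s_1)$ and leaving the shortened tail $s_4s_3s_2s_1s_2s_1$. Relations (b) and (c) follow by multiplying (a), and then (b), on the right by $s_3$ and $s_2$ respectively: (b) requires no further braid moves since $s_2s_3$ already appears as the terminal subword, while (c) uses $s_2s_3s_2=s_3s_2s_3$ together with $s_3s_1=s_1s_3$ and $s_4s_2=s_2s_4$ to pull an $s_2$ leftward into the parenthesized block, sending $(s_4s_3s_2s_1s_2s_1s_3s_2s_1)$ to $(s_4s_3s_2s_1s_2s_1s_3s_2s_1s_2)$. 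Finally, (d) is (c) multiplied on the right by $s_1$: on the left the new letter completes a second copy of the block $(s_4s_3s_2s_1s_2s_1s_3s_2s_1)$, and on the right a single use of $s_3s_1=s_1s_3$ rewrites the trailing $s_2s_3s_1$ as $s_2s_1s_3$, yielding the stated form.

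The main obstacle is (a): this is the one step that invokes all three kinds of moves simultaneously, and one must check that the pentagon braid fires at exactly the right position so that the extra $s_1$ is absorbed into the block while the outer tail ends in $s_4s_3s_2s_1s_2s_1$. Once (a) is in hand, the remaining three parts are essentially mechanical, mirroring the templates already used in Lemmas \ref{Lem_H4_432121}(g), \ref{Lem_H4_4321213}(f), and \ref{Lem_H4_43212132}(f),(g); no new ingredient beyond careful bookkeeping with the short braid and commutation relations is needed.
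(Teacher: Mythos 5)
Your proposal is correct and follows essentially the same route as the paper: part (a) is exactly the paper's computation (commuting $s_1$ past $s_3,s_4$, firing the pentagon braid, and invoking Lemma \ref{Lem_H4_43212132}(e)), merely run in the reverse direction, and (b)--(d) are obtained by right-multiplying by $s_3$, $s_2$, $s_1$ with precisely the normalizing moves the paper uses. No gaps.
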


\begin{proof} (a) We apply the commutativity of $s_1$ with $s_i (i=3,4)$, the braid relation $s_1s_2s_1s_2s_1=s_2s_1s_2s_1s_2$, and
the relation (e) in Lemma \ref{Lem_H4_43212132} to obtain that
$$\begin{aligned}
&(s_4s_3s_2s_1s_2s_1s_3s_2s_1)s_4s_3s_2s_1s_2s_1=(s_4s_3s_2s_1s_2s_1s_3s_2)s_4s_3(s_1s_2s_1s_2s_1)\\
&=(s_4s_3s_2s_1s_2s_1s_3s_2)s_4s_3s_2s_1s_2s_1 s_2
=s_3(s_4s_3s_2s_1s_2s_1s_3s_2s_1)s_4s_3s_2s_1 s_2.\end{aligned}$$

(b) This is from the multiplication of (a) by $s_3$ from the right.

(c) Multiplying (b) by $s_2$ from the right and using the relations $s_2s_3s_2=s_3s_2s_3$, $s_1s_3=s_3s_1$ and $s_4s_2=s_2s_4$,
we get this relation.

(d) The previous relation is multiplied by $s_1$ and the commutativity $31=13$ is applied to produce our relation.
\end{proof}

\begin{lem} The following relation holds in $\C[G]$\,{\rm :} \label{Lem_H4_4321213212}
$$(s_4s_3s_2s_1s_2s_1s_3s_2s_1s_2)^2-s_3(s_4s_3s_2s_1s_2s_1s_3s_2s_1s_2)s_4s_3s_2s_1s_2s_1s_3s_2s_1=0.$$
\end{lem}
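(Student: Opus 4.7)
The plan is to bootstrap from Lemma \ref{Lem_H4_432121321}(d) by a single multiplication on the right. Writing $v = s_4 s_3 s_2 s_1 s_2 s_1 s_3 s_2 s_1$, the word $w := s_4 s_3 s_2 s_1 s_2 s_1 s_3 s_2 s_1 s_2$ appearing in the statement equals $v s_2$. One immediately notices that the right-hand side of the claimed identity coincides with the right-hand side of Lemma \ref{Lem_H4_432121321}(d) with the two letters $s_2 s_1$ appended on the right. Hence, after multiplying that lemma by $s_2 s_1$, it suffices to prove the equality
\[
w^2 = v^2 s_2 s_1.
\]

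Expanding $w^2 = v \,(s_2 v s_2)$, this further reduces to the auxiliary commutation
\[
s_2 v s_2 = v s_2 s_1.
\]
I would verify this by instead establishing the equivalent form $s_2 v s_2 s_1 s_2 = v$: first push the leading $s_2$ through the $s_4 s_3 s_2$ prefix of $v$ via $s_4 s_2 = s_2 s_4$, $s_2 s_3 s_2 = s_3 s_2 s_3$, and $s_3 s_1 = s_1 s_3$; then apply the pentagonal braid relation $s_2 s_1 s_2 s_1 s_2 = s_1 s_2 s_1 s_2 s_1$ on the trailing five-letter block; and finally telescope the interior using $s_3 s_1 = s_1 s_3$, $s_2 s_3 s_2 = s_3 s_2 s_3$, and the quadratic relations $s_i^2 = 1$ until the word collapses back to $v$. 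Multiplying by $s_1 s_2$ on the right and cancelling squares then converts this to $s_2 v s_2 = v s_2 s_1$, and the main identity follows by combining $w^2 = v^2 s_2 s_1$ with Lemma \ref{Lem_H4_432121321}(d).

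The main obstacle is choosing the right order of rewrites in the auxiliary identity: most naive expansions lengthen the word rather than shortening it, and one must first reshape the interior so that the pentagonal relation produces adjacent squares $s_i s_i$ that can be cancelled. This is also the only place where the pentagonal braid relation is used in an essential way; once this choreography is in place, the rest of the argument is a one-line reduction to the previous lemma, in keeping with the inductive style of Lemmas \ref{Lem_H4_432}--\ref{Lem_H4_432121321}.
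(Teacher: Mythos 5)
Your argument is correct and is essentially the paper's own proof in different packaging: the paper's single chain of equalities likewise rewrites $w^2$ into $v^2s_2s_1$ (using $s_2s_4=s_4s_2$, $s_2s_3s_2=s_3s_2s_3$, $s_3s_1=s_1s_3$, the identity $(s_3s_2s_1)^2=s_2(s_3s_2s_1s_2s_1)$ from Lemma \ref{Lem_H3}, and the pentagonal relation) and then invokes Lemma \ref{Lem_H4_432121321}(d), exactly as you do. Your isolation of the auxiliary commutation $s_2vs_2=vs_2s_1$ is a clean way to present the same computation, and that identity does check out with the relations you name.
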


\begin{proof} We calculate that $$\begin{aligned}
&(s_4s_3s_2s_1s_2s_1s_3s_2s_1s_2)s_4s_3s_2s_1s_2s_1s_3s_2s_1s_2
=(s_4s_3s_2s_1s_2s_1s_3s_2s_1)s_4 s_2s_3s_2 s_1s_2s_1s_3s_2s_1s_2\\
&=(s_4s_3s_2s_1s_2s_1s_3s_2s_1)s_4s_3s_2s_3 s_1s_2s_1s_3s_2s_1s_2
=(s_4s_3s_2s_1s_2s_1s_3s_2s_1)s_4s_3s_2s_1 (s_3 s_2s_1)^2s_2\\
&=(s_4s_3s_2s_1s_2s_1s_3s_2s_1)s_4s_3s_2s_1 s_2s_3s_2s_1s_2s_1 s_2
=(s_4s_3s_2s_1s_2s_1s_3s_2s_1)s_4s_3s_2s_1 s_2s_3 s_1s_2s_1s_2s_1\\
&=(s_4s_3s_2s_1s_2s_1s_3s_2s_1)^2 s_2s_1
=s_3(s_4s_3s_2s_1s_2s_1s_3s_2s_1s_2)s_4s_3s_2s_1s_2s_1s_3 s_2s_1
\end{aligned}$$
by applying successively the relations $s_2s_4=s_4s_2$, $s_2s_3s_2=s_3s_2s_3$, $s_3s_1=s_1s_3$, Lemma \ref{Lem_H3} (c),
$s_2s_1s_2s_1s_2=s_1s_2s_1s_2s_1$, $s_3s_1=s_1s_3$, and Lemma \ref{Lem_H4_432121321} (d).
\end{proof}

\begin{lem} The following relations hold in $\C[G]$\,{\rm :} \label{Lem_H4_4321213212_4}

$\begin{aligned}{\rm (a)}\ &(s_4s_3s_2s_1s_2s_1s_3s_2s_1s_2)(s_4s_3s_2s_1s_2s_3)s_4s_3\\
&-(s_4s_3s_2s_1s_2s_1s_3s_2s_1s_2s_3)(s_4s_3s_2s_1s_2s_3)s_4=0,\end{aligned}$

$\begin{aligned}{\rm (b)}\  &(s_4s_3s_2s_1s_2s_1s_3s_2s_1s_2)(s_4s_3s_2s_1s_2s_1s_3)s_4s_3\\
&-(s_4s_3s_2s_1s_2s_1s_3s_2s_1s_2s_3)(s_4s_3s_2s_1s_2s_1s_3)s_4=0,\end{aligned}$

$\begin{aligned}{\rm (c)}\  &(s_4s_3s_2s_1s_2s_1s_3s_2s_1s_2)(s_4s_3s_2s_1s_2s_1s_3s_2)s_4s_3s_2\\
&-(s_4s_3s_2s_1s_2s_1s_3s_2s_1s_2s_3)(s_4s_3s_2s_1s_2s_1s_3s_2)s_4s_3=0,\end{aligned}$

$\begin{aligned}{\rm (d)}\  &(s_4s_3s_2s_1s_2s_1s_3s_2s_1s_2)(s_4s_3s_2s_1s_2s_1s_3s_2s_1)s_4s_3s_2s_1s_2s_1\\
&-(s_4s_3s_2s_1s_2s_1s_3s_2s_1s_2s_3)(s_4s_3s_2s_1s_2s_1s_3s_2s_1)s_4s_3s_2s_1s_2=0,\end{aligned}$

$\begin{aligned}{\rm (e)}\  &(s_4s_3s_2s_1s_2s_1s_3s_2s_1s_2)(s_4s_3s_2s_1s_2s_1s_3s_2s_1)(s_4s_3s_2s_1s_2s_3)s_4s_3\\
&-s_2(s_4s_3s_2s_1s_2s_1s_3s_2s_1s_2)(s_4s_3s_2s_1s_2s_1s_3s_2s_1)(s_4s_3s_2s_1s_2s_3)s_4=0.\end{aligned}$
\end{lem}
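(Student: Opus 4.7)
The plan is to treat (a)--(d) uniformly as corollaries of earlier lemmas, and then to handle (e) as the one genuinely new identity.

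Set $w := s_4 s_3 s_2 s_1 s_2 s_1 s_3 s_2 s_1 s_2$. For (a), I would read the relation as the identity
$(s_4s_3s_2s_1s_2s_3)s_4s_3 = s_3(s_4s_3s_2s_1s_2s_3)s_4$ of Lemma~\ref{Lem_H4_432123}(b), multiplied on the left by $w$; the word $ws_3$ appearing in the target is just $w$ with the braid letter $s_3$ appended. The relations (b), (c), (d) have exactly the same format, where the inner factor and its $s_4s_3(\cdots)$ tail come, respectively, from Lemma~\ref{Lem_H4_4321213}(a), Lemma~\ref{Lem_H4_43212132}(a), and Lemma~\ref{Lem_H4_432121321}(a). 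In each of the four cases, I would simply left-multiply the cited relation by $w$ and read off the two sides.

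For (e), writing $\tilde{w} := s_4 s_3 s_2 s_1 s_2 s_1 s_3 s_2 s_1$ and $u := s_4 s_3 s_2 s_1 s_2 s_3$, the first move is again Lemma~\ref{Lem_H4_432123}(b), applied to the trailing $u\,s_4 s_3$ to obtain
\[
w \tilde{w}\, u\, s_4 s_3 \;=\; w \tilde{w}\, s_3\, u\, s_4 .
\]
So (e) is equivalent to $w \tilde{w}\, s_3 = s_2\, w \tilde{w}$ multiplied on the right by $u s_4$. Using Lemma~\ref{Lem_H4_4321213212} in the rearranged form $w \tilde{w} = s_3 w^{2}$ and the braid move $s_3 s_2 s_3 = s_2 s_3 s_2$, this further collapses to the single relation
\[
w^{2}\, s_3 \;=\; s_2 s_3 s_2\, w^{2} .
\]
I would then establish this by the same style of chained rewriting that produced Lemma~\ref{Lem_H4_4321213212}: cycle $s_3$ leftward through the second copy of $w$ using the commutations $s_1 s_3 = s_3 s_1$, $s_1 s_4 = s_4 s_1$, $s_2 s_4 = s_4 s_2$, together with the braid relations $s_2 s_3 s_2 = s_3 s_2 s_3$, $s_3 s_4 s_3 = s_4 s_3 s_4$ and $s_1 s_2 s_1 s_2 s_1 = s_2 s_1 s_2 s_1 s_2$, and invoke Lemma~\ref{Lem_H3}(c) and Lemma~\ref{Lem_H4_432121321}(d) whenever the long alternating $s_1$-$s_2$ and $s_1$-$s_3$ blocks need to be reshuffled, exactly as in the final line of the proof of Lemma~\ref{Lem_H4_4321213212}.

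The main obstacle is clearly (e). Parts (a)--(d) are essentially bookkeeping, since the head $w$ plays no role beyond that of a passive prefix and the actual braid computation is packaged inside an already proven lemma. By contrast, the $s_2$ that ends up on the far left of the right-hand side of (e) is \emph{not} visible anywhere in the left-hand side, so the argument must genuinely transport a letter across two copies of a length-nine block of the same shape as $\tilde{w}$. I expect the shortest route to be roughly parallel to the eight-step calculation in the proof of Lemma~\ref{Lem_H4_4321213212}, with an extra initial step supplied by Lemma~\ref{Lem_H4_432123}(b) to convert the trailing $s_4 s_3$ into $s_3\, u\, s_4$ before the main cascade begins.
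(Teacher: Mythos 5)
Your treatment of (a)--(d) coincides with the paper's: each relation is the corresponding earlier identity (Lemma~\ref{Lem_H4_432123}(b), Lemma~\ref{Lem_H4_4321213}(a), Lemma~\ref{Lem_H4_43212132}(a), Lemma~\ref{Lem_H4_432121321}(a)) left-multiplied by $w=s_4s_3s_2s_1s_2s_1s_3s_2s_1s_2$, with $ws_3$ read as the eleven-letter factor; that part is fine. Your opening move in (e), namely Lemma~\ref{Lem_H4_432123}(b) applied to the trailing $us_4s_3$, also matches the paper, and the further reduction to the single identity $w^2s_3=s_2s_3s_2w^2$, using $w\tilde w=s_3w^2$ from Lemma~\ref{Lem_H4_4321213212} (with $\tilde w=s_4s_3s_2s_1s_2s_1s_3s_2s_1$) and the braid move $s_3s_2s_3=s_2s_3s_2$, is algebraically correct.

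The gap is that this last identity, which carries the entire content of (e), is never proved, and the plan you sketch for it would not get off the ground. ``Cycling $s_3$ leftward through the second copy of $w$'' stalls immediately: $w$ ends in $s_2$, the pair $s_2s_3$ admits no leftward move, and in fact $ws_3=s_4s_3s_2s_1s_2s_1s_3s_2s_1s_2s_3$ contains no leading monomial of any relation established up to this point (it is itself one of the standard monomials $M_4$), so a rewriting confined to the second block cannot even begin. Any derivation must act across the junction of the two $w$-blocks, and the only relation available there is Lemma~\ref{Lem_H4_4321213212} itself, whose application simply undoes your reduction and returns you to $w\tilde w s_3$ --- exactly the configuration the paper works in. There the computation does go through: the trailing $s_3$ commutes past the final $s_1$ of $\tilde w$, the cascade $s_3s_2s_3=s_2s_3s_2$, $s_2s_1s_2s_1s_2=s_1s_2s_1s_2s_1$ expels an $s_1$ on the left (so $\tilde w s_3=s_1\tilde w$), and then Lemma~\ref{Lem_H3}(g) --- not (c) --- together with $s_4s_2=s_2s_4$ yields $ws_1=s_2w$, whence the leftmost $s_2$ appears. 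These two short identities are the missing computation; your detour through $w^2s_3=s_2s_3s_2w^2$ makes the problem no easier, and the toolbox you name (Lemma~\ref{Lem_H3}(c) and Lemma~\ref{Lem_H4_432121321}(d), as in the proof of Lemma~\ref{Lem_H4_4321213212}) is not the one that closes it.
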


\begin{proof} (a)$\sim$(d) Each relation follows from Lemma \ref{Lem_H4_432123} (b),
Lemma \ref{Lem_H4_4321213} (a), Lemma \ref{Lem_H4_43212132} (a), and
Lemma \ref{Lem_H4_432121321} (a), respectively.

(e) We compute that
$$\begin{aligned}
&(s_4s_3s_2s_1s_2s_1s_3s_2s_1s_2)(s_4s_3s_2s_1s_2s_1s_3s_2s_1)(s_4s_3s_2s_1s_2s_3)s_4s_3\\
&=(s_4s_3s_2s_1s_2s_1s_3s_2s_1s_2)(s_4s_3s_2s_1s_2s_1s_3s_2s_1)s_3(s_4s_3s_2s_1s_2s_3)s_4\\
&=(s_4s_3s_2s_1s_2s_1s_3s_2s_1s_2)s_4s_3s_2s_1s_2s_1s_3s_2s_3 s_1 (s_4s_3s_2s_1s_2s_3)s_4\\
&=(s_4s_3s_2s_1s_2s_1s_3s_2s_1s_2)s_4s_3s_2s_1s_2s_1s_2s_3s_2s_1 (s_4s_3s_2s_1s_2s_3)s_4\\
&=(s_4s_3s_2s_1s_2s_1s_3s_2s_1s_2)s_4s_3s_1s_2s_1s_2s_1s_3s_2s_1 (s_4s_3s_2s_1s_2s_3)s_4\\
&=s_4(s_3s_2s_1s_2s_1)^2 (s_4s_3s_2s_1s_2s_1s_3s_2s_1) (s_4s_3s_2s_1s_2s_3)s_4\\
&=s_4s_2(s_3s_2s_1s_2s_1s_3s_2s_1s_2)(s_4s_3s_2s_1s_2s_1s_3s_2s_1) (s_4s_3s_2s_1s_2s_3)s_4\\
&=s_2(s_4s_3s_2s_1s_2s_1s_3s_2s_1s_2)(s_4s_3s_2s_1s_2s_1s_3s_2s_1) (s_4s_3s_2s_1s_2s_3)s_4
\end{aligned}$$
by consecutively applying Lemma \ref{Lem_H4_432123} (b), $s_1s_3=s_3s_1$, $s_3s_2s_3=s_2s_3s_2$, $s_2s_1s_2s_1s_2=s_1s_2s_1s_2s_1$, $s_4s_3s_1=s_1s_4s_3$, Lemma \ref{Lem_H3} (g), and finally $s_4s_2=s_2s_4$.
\end{proof}

\begin{lem} The following relations hold in $\C[G]$\,{\rm :} \label{Lem_H4_43212132123}

{\rm (a)} $(s_4s_3s_2s_1s_2s_1s_3s_2s_1s_2s_3)s_1
-s_2(s_4s_3s_2s_1s_2s_1s_3s_2s_1s_2s_3)=0$,

{\rm (b)} $(s_4s_3s_2s_1s_2s_1s_3s_2s_1s_2s_3)s_2
-s_1(s_4s_3s_2s_1s_2s_1s_3s_2s_1s_2s_3)=0$.
\end{lem}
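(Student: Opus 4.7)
The plan is to derive each of (a) and (b) by a direct sequence of rewrites using only the defining braid and commutativity relations, plus, for part~(a), a single appeal to Lemma~\ref{Lem_H3}(g). I read the word as $w=s_4\cdot(s_3s_2s_1s_2s_1)\cdot(s_3s_2s_1s_2s_3)$; the fact that $s_4$ commutes with $s_1$ and $s_2$ means that whenever an $s_1$ or $s_2$ can be freed outside the central block, it will slide cleanly to the front.

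For part~(a), I would first apply $s_3s_1=s_1s_3$ at the extreme right of $ws_1$ to rewrite it as $s_4\cdot(s_3s_2s_1s_2s_1)^2\cdot s_3$. At this point the square $(s_3s_2s_1s_2s_1)^2$ is exactly what Lemma~\ref{Lem_H3}(g) handles: substituting $s_2(s_3s_2s_1s_2s_1)s_3s_2s_1s_2$ gives $s_4\cdot s_2\cdot(s_3s_2s_1s_2s_1s_3s_2s_1s_2)\cdot s_3$. One final commutativity $s_4s_2=s_2s_4$ then pulls the $s_2$ past $s_4$ and reassembles the expression as $s_2w$.

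For part~(b), no previous lemma is needed and the argument is entirely a chain of local braid and commutativity moves. Starting from $ws_2=s_4s_3s_2s_1s_2s_1s_3s_2s_1s_2s_3s_2$, I would perform the rewrites in the following order: first $s_2s_3s_2\to s_3s_2s_3$ on the final three letters; then $s_1s_3\to s_3s_1$ to push the exposed $s_3$ one step to the left; then $s_3s_2s_3\to s_2s_3s_2$ to move the rewrite further inward, which exposes a sub-word $s_2s_1s_2s_1s_2$ near the front. The $H_2$-braid $s_2s_1s_2s_1s_2\to s_1s_2s_1s_2s_1$ is then applicable, and two more commutations, $s_3s_1\to s_1s_3$ and $s_4s_1\to s_1s_4$, transport the newly freed $s_1$ all the way to the very front, yielding $s_1w$.

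There is no conceptual obstacle; every step is a one-line application of a relation already in hand. The only real task is bookkeeping, namely choosing the correct order of rewrites so that each intermediate word exposes the next applicable relation. A useful internal check is that (a) and (b) together say that $w$ conjugates $s_1$ to $s_2$ and $s_2$ to $s_1$, which is what one should expect if $w$ realizes the swap of $s_1$ and $s_2$ on an $H_2$-parabolic; in particular, if $w$ is an involution in $G$, then (a) and (b) are equivalent, so the two computations should be regarded as two faces of the same structural identity.
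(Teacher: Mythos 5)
Your proof is correct and takes essentially the same route as the paper: part (a) is step-for-step identical (apply $s_3s_1=s_1s_3$ to form $s_4(s_3s_2s_1s_2s_1)^2s_3$, invoke Lemma~\ref{Lem_H3}(g), then commute $s_2$ past $s_4$). For part (b) your first three braid moves simply inline the content of Lemma~\ref{Lem_H3}(d), which the paper cites directly before the same $H_2$-braid and the commutations of $s_1$ with $s_3$ and $s_4$.
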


\begin{proof}
(a) From $s_3s_1=s_1s_3$, Lemma \ref{Lem_H3} (g) and $s_4s_2=s_2s_4$, we get that $$\begin{aligned}
&(s_4s_3s_2s_1s_2s_1s_3s_2s_1s_2s_3)s_1=s_4(s_3s_2s_1s_2s_1)^2 s_3\\
&=s_4s_2(s_3s_2s_1s_2s_1)s_3s_2s_1s_2 s_3=s_2(s_4s_3s_2s_1s_2s_1s_3s_2s_1s_2s_3).\end{aligned}$$

(b) It is computed that $$\begin{aligned}
&s_4s_3s_2s_1s_2s_1(s_3s_2s_1s_2)s_3s_2=s_4s_3s_2s_1s_2s_1 s_2s_3s_2s_1s_2s_3\\
&=s_4s_3s_1s_2s_1s_2s_1 s_3s_2s_1s_2s_3=s_1 (s_4s_3s_2s_1s_2s_1 s_3s_2s_1s_2s_3)\end{aligned}$$
by Lemma \ref{Lem_H3} (d), $s_2s_1s_2s_1 s_2=s_1s_2s_1 s_2s_1$, and the commutativity of $s_1$ with $s_i (i=3,4)$.
\end{proof}

\begin{lem} The following relations hold in $\C[G]$\,{\rm :} \label{Lem_H4_43212132123_4}

$\begin{aligned}{\rm (a)}\ &(s_4s_3s_2s_1s_2s_1s_3s_2s_1s_2s_3)(s_4s_3s_2s_1s_2s_3)s_4s_3\\
&-(s_4s_3s_2s_1s_2s_1s_3s_2s_1s_2)(s_4s_3s_2s_1s_2s_3)s_4=0,\end{aligned}$

$\begin{aligned}{\rm (b)}\ &(s_4s_3s_2s_1s_2s_1s_3s_2s_1s_2s_3)(s_4s_3s_2s_1s_2s_1s_3)s_4s_3\\
&-(s_4s_3s_2s_1s_2s_1s_3s_2s_1s_2)(s_4s_3s_2s_1s_2s_1s_3)s_4=0,\end{aligned}$

$\begin{aligned}{\rm (c)}\ &(s_4s_3s_2s_1s_2s_1s_3s_2s_1s_2s_3)(s_4s_3s_2s_1s_2s_1s_3s_2)s_4s_3s_2\\
&-(s_4s_3s_2s_1s_2s_1s_3s_2s_1s_2)(s_4s_3s_2s_1s_2s_1s_3s_2)s_4s_3=0,\end{aligned}$

$\begin{aligned}{\rm (d)}\ &(s_4s_3s_2s_1s_2s_1s_3s_2s_1s_2s_3)(s_4s_3s_2s_1s_2s_1s_3s_2s_1)s_4s_3s_2s_1s_2s_1\\
&-(s_4s_3s_2s_1s_2s_1s_3s_2s_1s_2)(s_4s_3s_2s_1s_2s_1s_3s_2s_1)s_4s_3s_2s_1s_2=0.\end{aligned}$
\end{lem}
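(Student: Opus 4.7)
The plan is to prove all four parts (a)--(d) by a single uniform mechanism: in each case the left-hand side differs from the right-hand side by an instance of a previously established relation of the form $(\cdots)s_4s_3\cdots = s_3(\cdots)s_4\cdots$, with the introduced $s_3$ absorbed into the trailing $s_3$ of the leading factor $(s_4s_3s_2s_1s_2s_1s_3s_2s_1s_2s_3)$ via the quadratic relation $s_3^2=1$.

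More precisely, observe that $(s_4s_3s_2s_1s_2s_1s_3s_2s_1s_2s_3)\cdot s_3 = (s_4s_3s_2s_1s_2s_1s_3s_2s_1s_2)$ as an equality in $\C[G]$, since the two trailing copies of $s_3$ cancel using $s_3^2-1\in R$. Each of the four statements can then be rewritten as: take the corresponding ``primitive'' identity (Lemma \ref{Lem_H4_432123}(b) for (a), Lemma \ref{Lem_H4_4321213}(a) for (b), Lemma \ref{Lem_H4_43212132}(a) for (c), Lemma \ref{Lem_H4_432121321}(a) for (d)), left-multiply it by $(s_4s_3s_2s_1s_2s_1s_3s_2s_1s_2s_3)$, and apply the cancellation above to the resulting leading $s_3$ on the right-hand side.

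For instance, for (a), Lemma \ref{Lem_H4_432123}(b) gives $(s_4s_3s_2s_1s_2s_3)s_4s_3 = s_3(s_4s_3s_2s_1s_2s_3)s_4$, so
\[
(s_4s_3s_2s_1s_2s_1s_3s_2s_1s_2s_3)(s_4s_3s_2s_1s_2s_3)s_4s_3
=(s_4s_3s_2s_1s_2s_1s_3s_2s_1s_2s_3)\,s_3\,(s_4s_3s_2s_1s_2s_3)s_4,
\]
and the factor $(s_4s_3s_2s_1s_2s_1s_3s_2s_1s_2s_3)\,s_3$ collapses to $(s_4s_3s_2s_1s_2s_1s_3s_2s_1s_2)$, yielding exactly the right-hand side of (a). Parts (b), (c), (d) run identically with the corresponding lemmas replacing Lemma \ref{Lem_H4_432123}(b).

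Because each part reduces to a single application of a prior identity plus one use of the quadratic relation $s_3^2=1$, there is essentially no obstacle; the work is purely bookkeeping to verify that the ``tail'' of each previously established identity matches the suffix appearing in the statement of the present lemma. The proof can therefore be written compactly by displaying the one-line computation for (a) and noting that (b)--(d) are obtained by the same argument applied to Lemma \ref{Lem_H4_4321213}(a), Lemma \ref{Lem_H4_43212132}(a), and Lemma \ref{Lem_H4_432121321}(a), respectively.
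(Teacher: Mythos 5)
Your proposal is correct and follows exactly the paper's own argument: each part is the corresponding prior identity (Lemma \ref{Lem_H4_432123}(b), Lemma \ref{Lem_H4_4321213}(a), Lemma \ref{Lem_H4_43212132}(a), Lemma \ref{Lem_H4_432121321}(a)) left-multiplied by the leading factor, with the resulting $s_3s_3$ cancelled by $s_3^2=1$.
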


\begin{proof} Each relation is obtained from Lemma \ref{Lem_H4_432123} (b),
Lemma \ref{Lem_H4_4321213} (a),
Lemma \ref{Lem_H4_43212132} (a),
Lemma \ref{Lem_H4_432121321} (a), respectively, and followed by $s_3^2=1$.
\end{proof}

\begin{lem} The following relations hold in $\C[G]$\,{\rm :} \label{Lem_H4_43212132123)4321213212}

$\begin{aligned}{\rm (a)}\ &(s_4s_3s_2s_1s_2s_1s_3s_2s_1s_2s_3)(s_4s_3s_2s_1s_2s_1s_3s_2s_1s_2)s_4s_3s_2s_1s_2s_1s_3s_2s_1\\
&-s_3(s_4s_3s_2s_1s_2s_1s_3s_2s_1s_2s_3)(s_4s_3s_2s_1s_2s_1s_3s_2s_1s_2)s_4s_3s_2s_1s_2s_1s_3s_2=0,\end{aligned}$

$\begin{aligned}{\rm (b)}\ &(s_4s_3s_2s_1s_2s_1s_3s_2s_1s_2s_3)(s_4s_3s_2s_1s_2s_1s_3s_2s_1s_2)(s_4s_3s_2s_1s_2s_3)s_4s_3\\
&-(s_4s_3s_2s_1s_2s_1s_3s_2s_1s_2s_3)^2 (s_4s_3s_2s_1s_2s_3)s_4=0,\end{aligned}$

$\begin{aligned}{\rm (c)}\ &(s_4s_3s_2s_1s_2s_1s_3s_2s_1s_2s_3)(s_4s_3s_2s_1s_2s_1s_3s_2s_1s_2)(s_4s_3s_2s_1s_2s_1s_3)s_4s_3\\
&-(s_4s_3s_2s_1s_2s_1s_3s_2s_1s_2s_3)^2 (s_4s_3s_2s_1s_2s_1s_3)s_4=0,\end{aligned}$

$\begin{aligned}{\rm (d)}\ &(s_4s_3s_2s_1s_2s_1s_3s_2s_1s_2s_3)(s_4s_3s_2s_1s_2s_1s_3s_2s_1s_2)(s_4s_3s_2s_1s_2s_1s_3s_2)s_4s_3s_2\\
&-(s_4s_3s_2s_1s_2s_1s_3s_2s_1s_2s_3)^2 (s_4s_3s_2s_1s_2s_1s_3s_2)s_4s_3=0.\end{aligned}$
\end{lem}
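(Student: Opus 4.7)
The plan is to treat part (a) separately from parts (b)--(d). For notational convenience, let me write
$X = s_4s_3s_2s_1s_2s_1s_3s_2s_1s_2s_3$, $Y = s_4s_3s_2s_1s_2s_1s_3s_2s_1s_2$, $Z_1 = s_4s_3s_2s_1s_2s_1s_3s_2s_1$, $Z_2 = s_4s_3s_2s_1s_2s_1s_3s_2$, and $W = s_4s_3s_2s_1s_2s_1s_3$. The quadratic relations immediately supply the reductions $X = Ys_3$, $Y = Z_1s_2$, $Z_1 = Z_2s_1$, and $Z_2 = Ws_2$, which I would use freely below.

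Parts (b), (c), (d) are pure bookkeeping: each is obtained by left-multiplying the corresponding identity (a), (b), (c) of Lemma \ref{Lem_H4_4321213212_4} by $X$. For instance, Lemma \ref{Lem_H4_4321213212_4} (a) has the shape $Y\cdot A\cdot s_4s_3 = X\cdot A\cdot s_4$ with $A = s_4s_3s_2s_1s_2s_3$, and premultiplying both sides by $X$ yields (b) verbatim. Parts (c) and (d) follow identically from Lemma \ref{Lem_H4_4321213212_4} (b) and (c).

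Part (a), reading $XYZ_1 = s_3 XYZ_2$, is the substantive step. The plan is to recognize that both sides collapse to the common element $Y^3$. For the left-hand side, Lemma \ref{Lem_H4_4321213212} gives $YZ_1 = s_3 Y^2$, and combined with $Xs_3 = Y$ this reduces $XYZ_1 = X(s_3 Y^2) = (Xs_3)Y^2 = Y^3$ in two lines. For the right-hand side, first use $X = Ys_3$ to rewrite it as $s_3Ys_3YZ_2$; the missing ingredient is the auxiliary identity $Z_1Y = s_3YZ_2$, which I would derive from Lemma \ref{Lem_H4_432121321} (d) (yielding $Z_1^2 = s_3YW$) together with $Y = Z_1s_2$ and $Ws_2 = Z_2$, since then $Z_1Y = Z_1^2 s_2 = s_3YWs_2 = s_3YZ_2$. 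Substituting $s_3YZ_2 = Z_1Y$ into $s_3Ys_3YZ_2$ and reapplying $YZ_1 = s_3Y^2$ collapses the right-hand side to $Y^3$ as well.

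The main obstacle is part (a), precisely because it is not the direct left-multiplication of any previous relation. The creative step is to spot that both sides admit the common reduced form $Y^3$, and to assemble the intermediate identity $Z_1Y = s_3YZ_2$ out of Lemma \ref{Lem_H4_432121321} (d) combined with the elementary quadratic reductions; once this is in hand, the manipulation is short and each step is forced.
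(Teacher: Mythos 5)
Your proposal is correct, and for parts (b)--(d) it coincides with the paper's proof: the paper also obtains these directly from Lemma \ref{Lem_H4_4321213212_4} (a), (b), (c), and left-multiplication by $s_4s_3s_2s_1s_2s_1s_3s_2s_1s_2s_3$ is exactly the step that makes ``comes directly from'' precise. For part (a), however, you take a genuinely different route. The paper proves (a) by a one-directional rewriting chain, successively applying $s_3s_4s_3=s_4s_3s_4$, Lemma \ref{Lem_H4_43212} (e), Lemma \ref{Lem_H3} (c), the braid relation $s_2s_1s_2s_1s_2=s_1s_2s_1s_2s_1$, Lemma \ref{Lem_H3} (g), Lemma \ref{Lem_H4_4321213212} and Lemma \ref{Lem_H4_432121321} (d) until the left-hand side becomes the right-hand side. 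You instead show both sides equal the common element $Y^3$ (with $Y=s_4s_3s_2s_1s_2s_1s_3s_2s_1s_2$): the left side via $YZ_1=s_3Y^2$, which is Lemma \ref{Lem_H4_4321213212} read with a left factor $s_3$ cancelled by $s_3^2=1$, together with $Xs_3=Y$; the right side via the auxiliary identity $Z_1Y=s_3YZ_2$ that you correctly extract from Lemma \ref{Lem_H4_432121321} (d) and the quadratic reductions $Y=Z_1s_2$, $Ws_2=Z_2$. All the citations and reductions you use check out, and since the lemma only asserts equalities in $\C[G]$, your free use of $s_i^2=1$ (including inserting factors and cancelling on the left) is legitimate. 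What your argument buys is economy: only Lemmas \ref{Lem_H4_4321213212} and \ref{Lem_H4_432121321} (d) plus quadratic relations are needed, and the symmetry of the identity (both sides being $Y^3$) becomes visible. What the paper's longer chain buys is an explicit left-to-right rewriting sequence in the style used throughout its \GS basis computations; for establishing the lemma itself the two approaches are equally valid.
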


\begin{proof} (a) It is calculated that
$$\begin{aligned}&(s_4s_3s_2s_1s_2s_1s_3s_2s_1s_2s_3)(s_4s_3s_2s_1s_2s_1s_3s_2s_1s_2)s_4s_3s_2s_1s_2s_1s_3s_2s_1\\
&=(s_4s_3s_2s_1s_2s_1s_3s_2s_1s_2) s_4s_3s_2s_1s_2s_1 (s_4s_3s_2s_1s_2)^2 s_1s_3s_2s_1\\
&=(s_4s_3s_2s_1s_2s_1s_3s_2s_1s_2) s_4s_3s_2s_1s_2s_1s_3s_2 s_4s_3s_2s_1s_2s_1(s_3s_2 s_1)^2\\
&=(s_4s_3s_2s_1s_2s_1s_3s_2s_1s_2) s_4s_3s_2s_1s_2s_1s_3s_2 s_4s_3(s_2s_1s_2s_1s_2) s_3s_2s_1s_2s_1\\
&=(s_4s_3s_2s_1s_2s_1s_3s_2s_1s_2) (s_4s_3s_2s_1s_2s_1s_3s_2s_1) s_4(s_3s_2s_1s_2s_1)^2\\
&=(s_4s_3s_2s_1s_2s_1s_3s_2s_1s_2)^2 s_4s_3s_2s_1s_2s_1s_3s_2s_1s_2\\
&=s_3(s_4s_3s_2s_1s_2s_1s_3s_2s_1s_2) (s_4s_3s_2s_1s_2s_1s_3s_2s_1)^2  s_2\\
&=s_3(s_4s_3s_2s_1s_2s_1s_3s_2s_1s_2s_3) (s_4s_3s_2s_1s_2s_1s_3s_2s_1s_2) s_4s_3s_2s_1s_2s_1s_3s_2\end{aligned}$$
from $s_3s_4s_3=s_4s_3s_4$, Lemma \ref{Lem_H4_43212} (e), Lemma \ref{Lem_H3} (c), $s_2s_1s_2s_1s_2=s_1s_2s_1s_2s_1$, Lemma \ref{Lem_H3} (g),
Lemma \ref{Lem_H4_4321213212}, Lemma \ref{Lem_H4_432121321} (d).

(b)$\sim$(d) Each comes directly from Lemma \ref{Lem_H4_4321213212_4} (a), (b), (c), respectively.
\end{proof}

\begin{lem} The following relations hold in $\C[G]$\,{\rm :} \label{Lem_H4_43212132123)^2}



$\begin{aligned}{\rm (a)}\ &(s_4s_3s_2s_1s_2s_1s_3s_2s_1s_2s_3)^2 (s_4s_3s_2s_1s_2s_1s_3s_2s_1)s_4s_3s_2s_1s_2\\
&-s_3(s_4s_3s_2s_1s_2s_1s_3s_2s_1s_2s_3)^2 (s_4s_3s_2s_1s_2s_1s_3s_2s_1)s_4s_3s_2s_1=0,\end{aligned}$

$\begin{aligned}{\rm (b)}\ &(s_4s_3s_2s_1s_2s_1s_3s_2s_1s_2s_3)^2 (s_4s_3s_2s_1s_2s_1s_3s_2s_1s_2)s_4s_3s_2s_1s_2s_3\\
&-s_3(s_4s_3s_2s_1s_2s_1s_3s_2s_1s_2s_3)^2 (s_4s_3s_2s_1s_2s_1s_3s_2s_1s_2)s_4s_3s_2s_1s_2=0,\end{aligned}$

$\begin{aligned}{\rm (c)}\ &(s_4s_3s_2s_1s_2s_1s_3s_2s_1s_2s_3)^2 (s_4s_3s_2s_1s_2s_1s_3s_2s_1s_2)s_4s_3s_2s_1s_2s_1s_3\\
&-s_3(s_4s_3s_2s_1s_2s_1s_3s_2s_1s_2s_3)^2 (s_4s_3s_2s_1s_2s_1s_3s_2s_1s_2)s_4s_3s_2s_1s_2s_1=0.\end{aligned}$
\end{lem}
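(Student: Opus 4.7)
Abbreviate $W = s_4s_3s_2s_1s_2s_1s_3s_2s_1s_2s_3$, $U = s_4s_3s_2s_1s_2s_1s_3s_2s_1s_2$, and $V = s_4s_3s_2s_1s_2s_1s_3s_2s_1$. The key observation is that $W = Us_3$, so $Ws_3 = U$ and $W^2 = Us_3Us_3$. Each of (a), (b), (c) asserts that an $s_3$ sitting at the right end of a monomial of the form $W^2 \cdot (\text{tail})$ can be relocated to the far left of that monomial, which is to say that $s_3$ commutes (up to the appropriate swap) with a particular product of reflections.

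For (a), the plan is to start from Lemma \ref{Lem_H4_432121321}(a), namely $V \cdot s_4s_3s_2s_1s_2s_1 = s_3V \cdot s_4s_3s_2s_1s_2$. Left-multiplying by $W^2$ and right-multiplying by $s_1$ (using $s_1^2 = 1$ on the left side) converts this into $W^2 V \cdot s_4s_3s_2s_1s_2 = W^2 \cdot s_3V \cdot s_4s_3s_2s_1s_2 s_1$. The remaining task is to migrate the internal $s_3$ leftward across $W^2$ to the outermost position, which is accomplished by iterating $s_3s_4s_3 = s_4s_3s_4$, $s_1s_3=s_3s_1$, $s_4s_1 = s_1s_4$, $s_4s_2 = s_2s_4$, and the length-five braid $s_2s_1s_2s_1s_2 = s_1s_2s_1s_2s_1$; along the way $s_3^2=1$ and $s_1^2=1$ are used to absorb trailing factors. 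The substitution pattern parallels the one that produced Lemma \ref{Lem_H4_43212132123)4321213212}(a).

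For (b), I would use $W^2 = W \cdot Us_3$ to rewrite $W^2 U s_4s_3s_2s_1s_2 s_3 = WU \cdot s_3 U s_4s_3s_2s_1s_2 s_3$; substitute the expansion $s_3 U = s_4s_3s_2s_1s_2s_1 s_4s_3s_2s_1s_2$ (a direct consequence of $s_3s_4s_3 = s_4s_3s_4$ and the commutativity of $s_4$ with $s_1,s_2$); and then recognize that the resulting word admits a rewriting via Lemma \ref{Lem_H4_43212132123)4321213212}(b) (which controls $WU \cdot (s_4s_3s_2s_1s_2s_3)s_4s_3$) in combination with Lemma \ref{Lem_H4_43212132123_4}(a). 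Part (c) proceeds by the same method, invoking Lemma \ref{Lem_H4_43212132123)4321213212}(c) and Lemma \ref{Lem_H4_43212132123_4}(b) in place of (b) and (a) respectively.

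The main obstacle is bookkeeping: the monomials involved are 30--40 letters long, and identifying the correct substring at which to apply each braid or commutativity relation is the only delicate matter. Each individual move is elementary --- an instance of $s_i^2 = 1$, commutativity $s_is_j=s_js_i$ for $|i-j|\geq 2$, or the braid relation $s_is_{i+1}s_i=s_{i+1}s_is_{i+1}$ (with the length-five variant at the $(s_1,s_2)$ end) --- so once the chain of substitutions is laid out, its verification is routine and follows the template established by the proofs of the preceding lemmas.
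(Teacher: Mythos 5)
Your proposal is a plan rather than a proof: for identities between words of this length the entire content of the argument is the explicit chain of substitutions, and you defer exactly that to ``routine bookkeeping.'' In part (a) your reduction via Lemma \ref{Lem_H4_432121321}(a) is in fact completable, but the step you describe as ``migrating the internal $s_3$ leftward across $W^2$'' cannot be dispatched by merely iterating braid and commutativity relations in any transparent way; since $W=Us_3$ gives $W^2s_3=WU$, that migration is precisely the statement of Lemma \ref{Lem_H4_43212132123)4321213212}(a) (namely $WUV=s_3WU\,s_4s_3s_2s_1s_2s_1s_3s_2$), after which Lemma \ref{Lem_H4_43212132}(e) applied to the tail $(s_4s_3s_2s_1s_2s_1s_3s_2)s_4s_3s_2s_1s_2s_1$ closes the computation. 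These are the same two auxiliary lemmas the paper's proof of (a) rests on (together with $s_3s_4s_3=s_4s_3s_4$, Lemma \ref{Lem_H4_4321}(d) and the length-five braid), so your route for (a) is sound in outline, but as written it asserts the decisive step instead of performing or even citing it.

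The more serious divergence is in (b) and (c). The paper obtains (b) from (a) by a short manipulation of the last six letters only: using $s_4s_2=s_2s_4$, $s_2s_3s_2=s_3s_2s_3$ and $s_1s_3=s_3s_1$ one rewrites $Us_4s_3s_2s_1s_2s_3$ as $Vs_4s_3s_2s_1s_2\cdot s_3s_2$, applies (a) to the prefix $W^2Vs_4s_3s_2s_1s_2$, and undoes the same moves; and (c) is literally (b) multiplied on the right by $s_1$ (using $s_3s_1=s_1s_3$). Your proposal instead attempts (b) and (c) independently of (a), via $W^2=WUs_3$, the expansion $s_3U=s_4s_3s_2s_1s_2s_1\,s_4\,s_3s_2s_1s_2$, and Lemmas \ref{Lem_H4_43212132123)4321213212}(b),(c) and \ref{Lem_H4_43212132123_4}; but the word this substitution produces does not visibly contain the leading monomials of the cited lemmas, and you supply no chain of moves that would produce them. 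As it stands, (b) and (c) are unproved, and the dependence of (b) on (a) and of (c) on (b) --- which is what makes the lemma tractable --- is missed entirely.
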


\begin{proof} 
(a) We obtain that
$$\begin{aligned}&(s_4s_3s_2s_1s_2s_1s_3s_2s_1s_2s_3)s_4s_3s_2s_1s_2s_1s_3s_2s_1s_2s_3 (s_4s_3s_2s_1s_2s_1s_3s_2s_1)s_4s_3s_2s_1s_2\\
&=(s_4s_3s_2s_1s_2s_1s_3s_2s_1s_2s_3)(s_4s_3s_2s_1s_2s_1s_3s_2s_1s_2) s_4s_3s_2s_1s_2s_1 (s_4s_3s_2s_1)^2 s_2\\
&=(s_4s_3s_2s_1s_2s_1s_3s_2s_1s_2s_3)(s_4s_3s_2s_1s_2s_1s_3s_2s_1s_2) s_4s_3s_2s_1s_2s_1s_3s_2 s_4s_3(s_2s_1s_2s_1s_2)\\
&=(s_4s_3s_2s_1s_2s_1s_3s_2s_1s_2s_3)(s_4s_3s_2s_1s_2s_1s_3s_2s_1s_2) (s_4s_3s_2s_1s_2s_1s_3s_2s_1) s_4s_3s_2s_1s_2s_1\\
&=s_3(s_4s_3s_2s_1s_2s_1s_3s_2s_1s_2s_3)(s_4s_3s_2s_1s_2s_1s_3s_2s_1s_2) (s_4s_3s_2s_1s_2s_1s_3s_2) s_4s_3s_2s_1s_2s_1\\
&=s_3(s_4s_3s_2s_1s_2s_1s_3s_2s_1s_2s_3)^2 (s_4s_3s_2s_1s_2s_1s_3s_2s_1) s_4s_3s_2s_1
\end{aligned}$$
by consecutively applying the relations $s_3s_4s_3=s_4s_3s_4$, Lemma \ref{Lem_H4_4321} (d), $s_2s_1s_2s_1s_2=s_1s_2s_1s_2s_1$,
Lemma \ref{Lem_H4_43212132123)4321213212} (a), Lemma \ref{Lem_H4_43212132} (e).

(b) From $s_2s_4=s_4s_2$, $s_2s_3s_2=s_3s_2s_3$, $s_3s_1=s_1s_3$, $s_3s_2s_3=s_2s_3s_2$,
the previous relation (a), $s_1s_3=s_3s_1$, $s_3s_2s_3=s_2s_3s_2$, $s_4s_2=s_2s_4$,
we calculate that $$\begin{aligned}
&(s_4s_3s_2s_1s_2s_1s_3s_2s_1s_2s_3)^2 (s_4s_3s_2s_1s_2s_1s_3s_2s_1s_2)s_4s_3s_2s_1s_2s_3\\
&=(s_4s_3s_2s_1s_2s_1s_3s_2s_1s_2s_3)^2 (s_4s_3s_2s_1s_2s_1s_3s_2s_1)s_4s_2s_3s_2s_1s_2s_3\\
&=(s_4s_3s_2s_1s_2s_1s_3s_2s_1s_2s_3)^2 (s_4s_3s_2s_1s_2s_1s_3s_2s_1)s_4s_3s_2s_3s_1s_2s_3\\
&=(s_4s_3s_2s_1s_2s_1s_3s_2s_1s_2s_3)^2 (s_4s_3s_2s_1s_2s_1s_3s_2s_1)s_4s_3s_2s_1s_3s_2s_3\\
&=(s_4s_3s_2s_1s_2s_1s_3s_2s_1s_2s_3)^2 (s_4s_3s_2s_1s_2s_1s_3s_2s_1)s_4s_3s_2s_1s_2 s_3s_2\\
&=s_3(s_4s_3s_2s_1s_2s_1s_3s_2s_1s_2s_3)^2 (s_4s_3s_2s_1s_2s_1s_3s_2s_1) s_4s_3s_2s_1s_3s_2\\
&=s_3(s_4s_3s_2s_1s_2s_1s_3s_2s_1s_2s_3)^2 (s_4s_3s_2s_1s_2s_1s_3s_2s_1) s_4s_3s_2s_3s_1s_2\\
&=s_3(s_4s_3s_2s_1s_2s_1s_3s_2s_1s_2s_3)^2 (s_4s_3s_2s_1s_2s_1s_3s_2s_1) s_4s_2s_3s_2s_1s_2\\
&=s_3(s_4s_3s_2s_1s_2s_1s_3s_2s_1s_2s_3)^2 (s_4s_3s_2s_1s_2s_1s_3s_2s_1s_2) s_4s_3s_2s_1s_2.
\end{aligned}$$

(c) This is just from multiplying (b) from the right by $s_1$.
\end{proof}

\begin{lem} The following relations hold in $\C[G]$\,{\rm :} \label{Lem_H4_43212132123)^3}


$\begin{aligned}{\rm (a)}\ &(s_4s_3s_2s_1s_2s_1s_3s_2s_1s_2s_3)^3 (s_4s_3s_2s_1s_2s_3)s_4\\
&-s_3(s_4s_3s_2s_1s_2s_1s_3s_2s_1s_2s_3)^3 s_4s_3s_2s_1s_2s_3=0,\end{aligned}$

$\begin{aligned}{\rm (b)}\ &(s_4s_3s_2s_1s_2s_1s_3s_2s_1s_2s_3)^3 (s_4s_3s_2s_1s_2s_1s_3)s_4\\
&-s_3(s_4s_3s_2s_1s_2s_1s_3s_2s_1s_2s_3)^3 s_4s_3s_2s_1s_2s_1s_3=0,\end{aligned}$

$\begin{aligned}{\rm (c)}\ &(s_4s_3s_2s_1s_2s_1s_3s_2s_1s_2s_3)^3 (s_4s_3s_2s_1s_2s_1s_3s_2)s_4\\
&-s_3(s_4s_3s_2s_1s_2s_1s_3s_2s_1s_2s_3)^3 s_4s_3s_2s_1s_2s_1s_3s_2=0,\end{aligned}$

$\begin{aligned}{\rm (d)}\ &(s_4s_3s_2s_1s_2s_1s_3s_2s_1s_2s_3)^3 (s_4s_3s_2s_1s_2s_1s_3s_2s_1)s_4\\
&-s_3(s_4s_3s_2s_1s_2s_1s_3s_2s_1s_2s_3)^3 s_4s_3s_2s_1s_2s_1s_3s_2s_1=0,\end{aligned}$

$\begin{aligned}{\rm (e)}\ &(s_4s_3s_2s_1s_2s_1s_3s_2s_1s_2s_3)^3 (s_4s_3s_2s_1s_2s_1s_3s_2s_1s_2)s_4\\
&-s_3(s_4s_3s_2s_1s_2s_1s_3s_2s_1s_2s_3)^3 s_4s_3s_2s_1s_2s_1s_3s_2s_1s_2=0.\end{aligned}$
\end{lem}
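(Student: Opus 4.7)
Write $W := s_4 s_3 s_2 s_1 s_2 s_1 s_3 s_2 s_1 s_2 s_3$ and $W' := s_4 s_3 s_2 s_1 s_2 s_1 s_3 s_2 s_1 s_2$, so that $W' s_3 = W$, and let $N_a, N_b, N_c, N_d, N_e$ denote the five prefixes following $W^3$ in parts (a)--(e). The plan is to prove (a) and (b) directly and then obtain (c)--(e) from (b) by successive right-multiplication by $s_2$, $s_1$, $s_2$. Since each of $s_1, s_2$ commutes with $s_4$, multiplying both sides of (b) on the right by $s_2$ gives $W^3 (N_b s_2) s_4 = s_3 W^3 (N_b s_2)$; but $N_b s_2 = N_c$, so this is precisely (c). The same observation applied to (c) and to (d) yields (d) and (e) respectively, so only (a) and (b) really require work.

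For (a), I would reduce $W^3 N_a s_4$ to $s_3 W^3 N_a$ in two stages. First, multiplying Lemma \ref{Lem_H4_4321213212_4} (a) on the left by $W^2$ gives $W^3 N_a s_4 = W^2 W' N_a s_4 s_3$. Second, multiplying Lemma \ref{Lem_H4_43212132123)^2} (b) on the right by $s_4 s_3$ rewrites this further as $s_3 W^2 W' (s_4 s_3 s_2 s_1 s_2)(s_4 s_3)$. Using $s_4 s_1 = s_1 s_4$, $s_4 s_2 = s_2 s_4$ to push $s_4$ leftward through $s_2 s_1 s_2$, and then the braid relation $s_4 s_3 s_4 = s_3 s_4 s_3$, the inner word simplifies to $s_3 \cdot s_4 s_3 s_2 s_1 s_2 s_3 = s_3 N_a$, at which point the identity $W' s_3 = W$ collapses $s_3 W^2 W' s_3 N_a$ to $s_3 W^3 N_a$. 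The proof of (b) is strictly parallel: replace Lemma \ref{Lem_H4_4321213212_4} (a) by part (b) of the same lemma, replace Lemma \ref{Lem_H4_43212132123)^2} (b) by part (c), and carry out the analogous braid computation $(s_4 s_3 s_2 s_1 s_2 s_1) s_4 s_3 = s_3 (s_4 s_3 s_2 s_1 s_2 s_1 s_3) = s_3 N_b$.

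The main obstacle, such as it is, is bookkeeping rather than substance. One must match up, for each target identity, the right ``reduced'' prefix $N'_i$ appearing in the earlier $W^2$-level lemma with the target $N_i$, and verify that the braid manipulation $N'_i s_4 s_3 = s_3 N_i$ pushes $s_4$ past the subword cleanly so that the telescoping identity $W' s_3 = W$ can close the argument. Once (a) and (b) are in hand, the commutations $s_1 s_4 = s_4 s_1$ and $s_2 s_4 = s_4 s_2$ make (c), (d), (e) essentially free.
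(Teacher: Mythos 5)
Your proposal is correct and follows essentially the same route as the paper: the paper also proves (a) via Lemma \ref{Lem_H4_43212132123)^2}~(b) sandwiched between the prefix-commutation identities (it cites Lemma \ref{Lem_H4_432123}~(b) and Lemma \ref{Lem_H4_43212}~(b), which are exactly your first and third steps in equivalent form), proves (b) analogously with parts (c) of Lemma \ref{Lem_H4_43212132123)^2} and Lemma \ref{Lem_H4_432121}~(b), and then obtains (c)--(e) by right-multiplying by $s_2,s_1,s_2$ and commuting past $s_4$.
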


\begin{proof} (a) Using Lemma \ref{Lem_H4_432123} (b), Lemma \ref{Lem_H4_43212132123)^2} (b), Lemma \ref{Lem_H4_43212} (b),
we deduce that
$$\begin{aligned}
&(s_4s_3s_2s_1s_2s_1s_3s_2s_1s_2s_3)^2 (s_4s_3s_2s_1s_2s_1s_3s_2s_1s_2)s_3 (s_4s_3s_2s_1s_2s_3)s_4\\
&=(s_4s_3s_2s_1s_2s_1s_3s_2s_1s_2s_3)^2 (s_4s_3s_2s_1s_2s_1s_3s_2s_1s_2) (s_4s_3s_2s_1s_2s_3)s_4s_3\\
&=s_3(s_4s_3s_2s_1s_2s_1s_3s_2s_1s_2s_3)^2 (s_4s_3s_2s_1s_2s_1s_3s_2s_1s_2)s_4s_3s_2s_1s_2 s_4s_3\\
&=s_3(s_4s_3s_2s_1s_2s_1s_3s_2s_1s_2s_3)^3 s_4s_3s_2s_1s_2s_3.\end{aligned}$$

(b) In the same way as in (a), applying Lemma \ref{Lem_H4_4321213} (a), Lemma \ref{Lem_H4_43212132123)^2} (c), Lemma \ref{Lem_H4_432121} (b)
results in our relation.

(c)$\sim$(e) We multiply the previous relation from the right by a reflection $s_i$ ($i=2,1,2$, successively), showing the relation.
\end{proof}


Finally, we consider the relation of the longest length in our \GS basis computation.

\begin{lem} The following relation holds in $\C[G]$\,{\rm :} \label{Lem_H4_longest}

$$(s_4s_3s_2s_1s_2s_1s_3s_2s_1s_2s_3)^4 s_4s_3-s_3(s_4s_3s_2s_1s_2s_1s_3s_2s_1s_2s_3)^4 s_4=0.$$
\end{lem}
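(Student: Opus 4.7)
The plan is to factor $w=s_4s_3s_2s_1s_2s_1s_3s_2s_1s_2s_3$ as $w=w's_3$, where $w'=s_4s_3s_2s_1s_2s_1s_3s_2s_1s_2$, so that the trailing $s_3$ of the fourth copy of $w$ combines with the suffix $s_4s_3$ of the left-hand side to form the braid pattern $s_3s_4s_3=s_4s_3s_4$. Once this single braid move is executed, the identity will reduce directly to Lemma \ref{Lem_H4_43212132123)^3}(e), whose conclusion is tailored precisely to handle a word of the form $w^{3}w's_{4}$.

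Concretely, I would first apply the braid relation $s_{3}s_{4}s_{3}=s_{4}s_{3}s_{4}$ to obtain
$$w^{4}s_{4}s_{3}\;=\;w^{3}w'\,(s_{3}s_{4}s_{3})\;=\;w^{3}(w's_{4})\,s_{3}s_{4}.$$
Then I would invoke Lemma \ref{Lem_H4_43212132123)^3}(e), which, in our abbreviated notation, reads $w^{3}w's_{4}=s_{3}w^{3}w'$, to get
$$w^{4}s_{4}s_{3}\;=\;s_{3}w^{3}w'\,s_{3}s_{4}\;=\;s_{3}w^{3}(w's_{3})\,s_{4}\;=\;s_{3}w^{4}s_{4},$$
which is exactly the claimed relation.

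The main obstacle is recognizing the correct bookkeeping rather than any genuinely new algebraic content. One has to notice that isolating the final $s_{3}$ of the fourth factor $w$ produces the pattern $s_{3}s_{4}s_{3}$, and that after the braid swap the remaining subword $w^{3}w's_{4}$ matches the left-hand side of Lemma \ref{Lem_H4_43212132123)^3}(e) verbatim. All nontrivial manipulations inside the $H_{3}$-subword $s_{4}s_{3}s_{2}s_{1}s_{2}s_{1}s_{3}s_{2}s_{1}s_{2}$ and inside $w^{3}$ have already been absorbed into that cited lemma, so at this final step no further application of Lemma \ref{Lem_H3} or of the earlier lemmas in this section is required.
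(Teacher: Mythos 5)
Your proposal is correct and follows exactly the paper's own argument: isolate the final $s_3$ of the fourth factor, apply the braid relation $s_3s_4s_3=s_4s_3s_4$, and then invoke Lemma \ref{Lem_H4_43212132123)^3}(e) before reassembling $w's_3$ into the fourth copy of $w$. No discrepancy with the published proof.
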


\begin{proof} 
We obtain that
$$\begin{aligned}
&(s_4s_3s_2s_1s_2s_1s_3s_2s_1s_2s_3)^3(s_4s_3s_2s_1s_2s_1s_3s_2s_1s_2)s_3 s_4s_3\\
&=(s_4s_3s_2s_1s_2s_1s_3s_2s_1s_2s_3)^3(s_4s_3s_2s_1s_2s_1s_3s_2s_1s_2)s_4 s_3 s_4\\
&=s_3(s_4s_3s_2s_1s_2s_1s_3s_2s_1s_2s_3)^4 s_4
\end{aligned}$$
by the braid relation $s_3s_4s_3=s_4s_3s_4$ and the relation (e) in Lemma \ref{Lem_H4_43212132123)^3},
which proves our final Lemma.
\end{proof}

Now we have a \GS basis for $\C[G]$ and the corresponding standard monomials.

\begin{thm}\label{Thm_H4}
We denote by $\widehat{R}$ the set of defining relations $R$
combined with the relations in Lemma \ref{Lem_H3} and Lemmas \ref{Lem_H4_432}$\sim$\ref{Lem_H4_longest}. Then $\widehat{R}$
forms a \GS basis for $\C[G]$. The corresponding $\widehat{R}$-standard monomials 
are of the form $$M_2M_3M_4,$$ where
$M_2$ is among the standard monomials (\ref{std_monomials_H2})
, $M_3$ is among the standard monomials (\ref{std_monomials_H3}) 
and $M_4$ is one of the following 120 monomials
\,{\rm :}
\begin{flushleft}
$1$,
$s_4$,
$s_4s_3$,
$s_4s_3s_2$,
$s_4s_3s_2s_1$,
$s_4s_3s_2s_1s_2$,
$s_4s_3s_2s_1s_2s_3$,
$(s_4s_3s_2s_1s_2s_3)s_4$,
$s_4s_3s_2s_1s_2s_1$,
$s_4s_3s_2s_1s_2s_1s_3$,
$(s_4s_3s_2s_1s_2s_1s_3)s_4$,
$s_4s_3s_2s_1s_2s_1s_3s_2$, $(s_4s_3s_2s_1s_2s_1s_3s_2)s_4$, $(s_4s_3s_2s_1s_2s_1s_3s_2)s_4s_3$,
$s_4s_3s_2s_1s_2s_1s_3s_2s_1$,
$(s_4s_3s_2s_1s_2s_1s_3s_2s_1)s_4$,
$(s_4s_3s_2s_1s_2s_1s_3s_2s_1)s_4s_3$,
$(s_4s_3s_2s_1s_2s_1s_3s_2s_1)s_4s_3s_2$,
$(s_4s_3s_2s_1s_2s_1s_3s_2s_1)s_4s_3s_2s_1$,
$(s_4s_3s_2s_1s_2s_1s_3s_2s_1)s_4s_3s_2s_1s_2$,
$(s_4s_3s_2s_1s_2s_1s_3s_2s_1)s_4s_3s_2s_1s_2s_3$
, $\thicksim s_4$,
$s_4s_3s_2s_1s_2s_1s_3s_2s_1s_2$,
$(s_4s_3s_2s_1s_2s_1s_3s_2s_1s_2)s_4$,
$(s_4s_3s_2s_1s_2s_1s_3s_2s_1s_2)s_4s_3$,
$(s_4s_3s_2s_1s_2s_1s_3s_2s_1s_2)s_4s_3s_2$,
$(s_4s_3s_2s_1s_2s_1s_3s_2s_1s_2)s_4s_3s_2s_1$,
$(s_4s_3s_2s_1s_2s_1s_3s_2s_1s_2)s_4s_3s_2s_1s_2$,
$(s_4s_3s_2s_1s_2s_1s_3s_2s_1s_2)s_4s_3s_2s_1s_2s_3$
, $\thicksim s_4$,
$(s_4s_3s_2s_1s_2s_1s_3s_2s_1s_2)s_4s_3s_2s_1s_2s_1$,
$(s_4s_3s_2s_1s_2s_1s_3s_2s_1s_2)s_4s_3s_2s_1s_2s_1s_3$
, $\thicksim s_4$,
$(s_4s_3s_2s_1s_2s_1s_3s_2s_1s_2)s_4s_3s_2s_1s_2s_1s_3s_2$
, $\thicksim s_4$
, $\thicksim s_4s_3$,
$(s_4s_3s_2s_1s_2s_1s_3s_2s_1s_2)s_4s_3s_2s_1s_2s_1s_3s_2s_1$
, $\thicksim s_4$
, $\thicksim s_4s_3$
, $\thicksim s_4s_3s_2$
, $\thicksim s_4s_3s_2s_1$
, $\thicksim s_4s_3s_2s_1s_2$
, $\thicksim s_4s_3s_2s_1s_2s_3$
, $\thicksim (s_4s_3s_2s_1s_2s_3)s_4$,
$s_4s_3s_2s_1s_2s_1s_3s_2s_1s_2s_3$,
$(s_4s_3s_2s_1s_2s_1s_3s_2s_1s_2s_3)s_4$,
$(s_4s_3s_2s_1s_2s_1s_3s_2s_1s_2s_3)s_4s_3$,
$(s_4s_3s_2s_1s_2s_1s_3s_2s_1s_2s_3)s_4s_3s_2$,
$(s_4s_3s_2s_1s_2s_1s_3s_2s_1s_2s_3)s_4s_3s_2s_1$,
$(s_4s_3s_2s_1s_2s_1s_3s_2s_1s_2s_3)s_4s_3s_2s_1s_2$,
$(s_4s_3s_2s_1s_2s_1s_3s_2s_1s_2s_3)s_4s_3s_2s_1s_2s_3$
, $\thicksim s_4$,
$(s_4s_3s_2s_1s_2s_1s_3s_2s_1s_2s_3)s_4s_3s_2s_1s_2s_1$,
$(s_4s_3s_2s_1s_2s_1s_3s_2s_1s_2s_3)s_4s_3s_2s_1s_2s_1s_3$
, $\thicksim s_4$,
$(s_4s_3s_2s_1s_2s_1s_3s_2s_1s_2s_3)s_4s_3s_2s_1s_2s_1s_3s_2$
, $\thicksim s_4$
, $\thicksim s_4s_3$,
$(s_4s_3s_2s_1s_2s_1s_3s_2s_1s_2s_3)s_4s_3s_2s_1s_2s_1s_3s_2s_1$
, $\thicksim s_4$
, $\thicksim s_4s_3$
, $\thicksim s_4s_3s_2$
, $\thicksim s_4s_3s_2s_1$
, $\thicksim s_4s_3s_2s_1s_2$
, $\thicksim s_4s_3s_2s_1s_2s_3$
, $\thicksim (s_4s_3s_2s_1s_2s_3)s_4$,
$(s_4s_3s_2s_1s_2s_1s_3s_2s_1s_2s_3)s_4s_3s_2s_1s_2s_1s_3s_2s_1s_2$
, $\thicksim s_4$
, $\thicksim s_4s_3$
, $\thicksim s_4s_3s_2$
, $\thicksim s_4s_3s_2s_1$
, $\thicksim s_4s_3s_2s_1s_2$
, $\thicksim s_4s_3s_2s_1s_2s_3$
, $\thicksim (s_4s_3s_2s_1s_2s_3)s_4$
, $\thicksim s_4s_3s_2s_1s_2s_1$
, $\thicksim s_4s_3s_2s_1s_2s_1s_3$
, $\thicksim (s_4s_3s_2s_1s_2s_1s_3)s_4$
, $\thicksim s_4s_3s_2s_1s_2s_1s_3s_2$
, $\thicksim (s_4s_3s_2s_1s_2s_1s_3s_2)s_4$
, $\thicksim (s_4s_3s_2s_1s_2s_1s_3s_2)s_4s_3$,
$(s_4s_3s_2s_1s_2s_1s_3s_2s_1s_2s_3)^2$,
$(s_4s_3s_2s_1s_2s_1s_3s_2s_1s_2s_3)^2s_4$,
$(s_4s_3s_2s_1s_2s_1s_3s_2s_1s_2s_3)^2s_4s_3$,
$(s_4s_3s_2s_1s_2s_1s_3s_2s_1s_2s_3)^2s_4s_3s_2$,
$(s_4s_3s_2s_1s_2s_1s_3s_2s_1s_2s_3)^2s_4s_3s_2s_1$,
$(s_4s_3s_2s_1s_2s_1s_3s_2s_1s_2s_3)^2s_4s_3s_2s_1s_2$,
$(s_4s_3s_2s_1s_2s_1s_3s_2s_1s_2s_3)^2s_4s_3s_2s_1s_2s_3$
, $\thicksim s_4$,
$(s_4s_3s_2s_1s_2s_1s_3s_2s_1s_2s_3)^2s_4s_3s_2s_1s_2s_1$,
$(s_4s_3s_2s_1s_2s_1s_3s_2s_1s_2s_3)^2s_4s_3s_2s_1s_2s_1s_3$
, $\thicksim s_4$,
$(s_4s_3s_2s_1s_2s_1s_3s_2s_1s_2s_3)^2s_4s_3s_2s_1s_2s_1s_3s_2$
, $\thicksim s_4$
, $\thicksim s_4s_3$,
$(s_4s_3s_2s_1s_2s_1s_3s_2s_1s_2s_3)^2s_4s_3s_2s_1s_2s_1s_3s_2s_1$
, $\thicksim s_4$
, $\thicksim s_4s_3$
, $\thicksim s_4s_3s_2$
, $\thicksim s_4s_3s_2s_1$,
$(s_4s_3s_2s_1s_2s_1s_3s_2s_1s_2s_3)^2s_4s_3s_2s_1s_2s_1s_3s_2s_1s_2$
, $\thicksim s_4$
, $\thicksim s_4s_3$
, $\thicksim s_4s_3s_2$
, $\thicksim s_4s_3s_2s_1$
, $\thicksim s_4s_3s_2s_1s_2$
, $\thicksim s_4s_3s_2s_1s_2s_1$,
$(s_4s_3s_2s_1s_2s_1s_3s_2s_1s_2s_3)^3$,
$(s_4s_3s_2s_1s_2s_1s_3s_2s_1s_2s_3)^3s_4$,
$(s_4s_3s_2s_1s_2s_1s_3s_2s_1s_2s_3)^3s_4s_3$,
$(s_4s_3s_2s_1s_2s_1s_3s_2s_1s_2s_3)^3s_4s_3s_2$,
$(s_4s_3s_2s_1s_2s_1s_3s_2s_1s_2s_3)^3s_4s_3s_2s_1$,
$(s_4s_3s_2s_1s_2s_1s_3s_2s_1s_2s_3)^3s_4s_3s_2s_1s_2$,
$(s_4s_3s_2s_1s_2s_1s_3s_2s_1s_2s_3)^3s_4s_3s_2s_1s_2s_3$,
$(s_4s_3s_2s_1s_2s_1s_3s_2s_1s_2s_3)^3s_4s_3s_2s_1s_2s_1$,
$(s_4s_3s_2s_1s_2s_1s_3s_2s_1s_2s_3)^3s_4s_3s_2s_1s_2s_1s_3$,
$(s_4s_3s_2s_1s_2s_1s_3s_2s_1s_2s_3)^3s_4s_3s_2s_1s_2s_1s_3s_2$,
$(s_4s_3s_2s_1s_2s_1s_3s_2s_1s_2s_3)^3s_4s_3s_2s_1s_2s_1s_3s_2s_1$,
$(s_4s_3s_2s_1s_2s_1s_3s_2s_1s_2s_3)^3s_4s_3s_2s_1s_2s_1s_3s_2s_1s_2$,
$(s_4s_3s_2s_1s_2s_1s_3s_2s_1s_2s_3)^4$,
$(s_4s_3s_2s_1s_2s_1s_3s_2s_1s_2s_3)^4s_4$,
\end{flushleft}
where, for monomials $m,m_1\ldots,m_n$, the enumeration $m,\thicksim m_1,\ldots,\thicksim m_n$ means $m,mm_1,\ldots,mm_n$.
\end{thm}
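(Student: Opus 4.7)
\medskip

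The plan is to follow the strategy (I) used in the $H_3$ proof: enumerate all $\widehat{R}$-standard monomials, show that the list $M_2M_3M_4$ given in the statement exhausts them, and verify that the count matches $|G|=14400$. By Theorem~\ref{cor-1}, this automatically upgrades $\widehat{R}$ to a \GS basis without any need to verify closure under composition by hand (which would be prohibitive for a set of this size).

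First, I would record the inductive block structure built into the list. The leading monomials of the relations in Lemmas~\ref{Lem_H4_432}--\ref{Lem_H4_longest} have been designed so that every $\widehat{R}$-standard word decomposes uniquely as a concatenation $M_2M_3M_4$, where $M_2$ ranges over the 10 standard words of $H_2$ in~(\ref{std_monomials_H2}), $M_3$ ranges over the 12 standard words of $H_3$ in~(\ref{std_monomials_H3}), and $M_4$ ranges over the 120 displayed words (the ``$H_3$-coset representatives'' that begin with $s_4$, together with $1$). So the combinatorial skeleton is already fixed; what remains is to see that this set is genuinely closed under right-multiplication by each generator and that nothing in the list is reducible.

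Second, the heart of the argument is a case-by-case check: for each of the 120 candidate $M_4$'s and each $s_i$ with $1\le i\le 4$, verify that $M_4 s_i$ either (a) extends to another candidate $M_4'$ on the list, or (b) contains the leading monomial of exactly one of the relations in Lemmas~\ref{Lem_H4_432}--\ref{Lem_H4_longest} (or the previously established $H_3$-level relations in Lemma~\ref{Lem_H3}) and rewrites as $M_3'M_4'$ or $M_2'M_3'M_4'$. The lemmas have been organized in blocks matching the leftmost ``new'' factor of $M_4$: Lemma~\ref{Lem_H4_432} handles the jump past $s_4s_3s_2$, Lemma~\ref{Lem_H4_4321} past $s_4s_3s_2s_1$, and so on, up through Lemma~\ref{Lem_H4_longest} handling the extremal word $(s_4s_3s_2s_1s_2s_1s_3s_2s_1s_2s_3)^4s_4s_3$. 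The same bookkeeping simultaneously certifies that no $M_4$ on the list itself contains any leading monomial of $\widehat{R}$, so every $M_2M_3M_4$ is $\widehat{R}$-standard.

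Third, once closure is verified, one checks the arithmetic $10\times 12\times 120=14400=|H_4|$ (using, e.g., \cite{Humphreys90}). Combined with the general fact that the $\widehat{R}$-standard monomials span $\C[G]$, the matching dimension count forces them to be a basis, and Theorem~\ref{cor-1}(c) yields that $\widehat{R}$ is a \GS basis.

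The main obstacle is not any single reduction but the sheer administrative burden of the 120-by-4 closure table. The difficulty is choreographing the order in which the lemmas are invoked so that each product $M_4 s_i$ reduces in one pass: when $M_4$ ends in a block whose leading monomial is of the form $(s_4s_3s_2s_1s_2s_1s_3s_2s_1s_2s_3)^k \cdot v$, the right candidate from Lemmas~\ref{Lem_H4_43212132123_4}--\ref{Lem_H4_43212132123)^3} must be picked, and one has to ensure that after reduction the resulting tail remains inside the 120-element $M_4$-list rather than producing a word that would require a further (unstated) relation. Once that choreography is in place, every individual step is a short application of the braid and commutation relations already recorded.
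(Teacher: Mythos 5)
Your proposal matches the paper's own argument: the paper proves Theorem \ref{Thm_H4} exactly by the counting strategy (I) — checking that each product $M_2M_3M_4\,s_i$ reduces via $\widehat{R}$ to another listed monomial, enumerating the $\widehat{R}$-standard monomials as the $10\times 12\times 120=14400$ words $M_2M_3M_4$, matching this with $|G|=\dim_{\C}\C[G]$, and invoking Theorem \ref{cor-1}. Your write-up is essentially the same route, just spelling out the bookkeeping (standardness of the listed words, closure under right multiplication, span plus dimension count) in more detail than the paper does.
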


\begin{proof}
We can check that the product of each $M_2M_3M_4$ and $s_i$ $(i=1,2,3,4)$ is also one of the monomials $M_2M_3M_4$ by the relations in $\widehat{R}$, and enumerated all $\widehat{R}$-standard monomials in $\C\langle s_1,s_2,s_3\rangle$, as in the above.

The number of $\widehat{R}$-standard monomials is $$
10\times 12\times 120=14400,$$
which is exactly equal to the order of $G$ (See \cite[\S2.13]{Humphreys90}), the dimension of $\C[G]$ as a $\C$-vector space.
Hence, Theorem \ref{cor-1} leads us that the set $\widehat{R}$ is a \GS basis for $\C[G]$.
\end{proof}

\begin{ex} Since we have a \GS basis and the monomial basis for $\C[G]$, we have the multiplication table between the 11400 standard monomials.
For example, if we multiply an element $(s_4s_3s_2s_1s_2s_1s_3s_2s_1s_2s_3)^4 s_4$ from the right by $s_2$ then $$\begin{aligned}
(s_4s_3s_2s_1s_2s_1s_3s_2s_1s_2s_3)^4 s_4 s_2&=(s_4s_3s_2s_1s_2s_1s_3s_2s_1s_2s_3)^3 (s_4s_3s_2s_1s_2s_1s_3s_2s_1s_2s_3)s_2 s_4\\
&=(s_4s_3s_2s_1s_2s_1s_3s_2s_1s_2s_3)^3  s_1(s_4s_3s_2s_1s_2s_1s_3s_2s_1s_2s_3) s_4\\
&=(s_4s_3s_2s_1s_2s_1s_3s_2s_1s_2s_3)^2 s_2(s_4s_3s_2s_1s_2s_1s_3s_2s_1s_2s_3)^2 s_4\\
&=(s_4s_3s_2s_1s_2s_1s_3s_2s_1s_2s_3)s_1 (s_4s_3s_2s_1s_2s_1s_3s_2s_1s_2 s_3)^3  s_4\\
&=s_2(s_4s_3s_2s_1s_2s_1s_3s_2s_1s_2s_3)^4 s_4
\end{aligned}$$
from applying $s_4s_2=s_2s_4$ and the relations in Lemma \ref{Lem_H4_43212132123}.
\end{ex}

\noindent {\it Remark}. (1) We find that the set of the above $120$ standard monomials in Theorem \ref{Thm_H4}
forms the set of right coset representatives of the subgroup of type $H_3$ in the Coxeter group of type $H_4$.

(2) The monomial $$s_1s_2s_1s_2s_1(s_3s_2s_1s_2s_1)(s_3s_2s_1s_2)s_3 (s_4s_3s_2s_1s_2s_1s_3s_2s_1s_2s_3)^4 s_4$$
is the longest element in $G$,
which is of length $60$, the number of positive roots in a root system of type $H_4$.

\vskip 10mm
\proof[Acknowledgements]
The research of the first author was supported by 
NRF Grant \# 2017078374,
and the corresponding author's research was supported by NRF Grant \# 
2018R1D1A1B07044111
.

\vskip 10mm

\end{document}